\DeclareMathAlphabet{\mathpzc}{OT1}{pzc}{m}{it}
\renewcommand{\epsilon}{\varepsilon}
\newcommand{\bbQ}{\mathbb{Q}}
\newcommand{\bbN}{\mathbb{N}}
\newcommand{\bbR}{\mathbb{R}}
\newcommand{\calC}{\mathcal{C}}
\newcommand{\calP}{\mathcal{P}}
\newcommand{\calV}{\mathcal{V}}
\newcommand{\calA}{\mathcal{A}}
\newcommand{\calB}{\mathcal{B}}
\newcommand{\calI}{\mathcal{I}}
\newcommand{\calT}{\mathcal{T}}
\newcommand{\calZ}{\mathcal{Z}}
\renewcommand{\epsilon}{\varepsilon}
\renewcommand{\int}{\operatorname{int}}
\renewcommand{\phi}{\varphi}
\newcommand{\inte}{\operatorname{int}}
\newtheorem{thm}{Theorem}
\newtheorem{pro}[thm]{Proposition}
\newtheorem{lem}[thm]{Lemma}
\newtheorem{cor}[thm]{Corollary}
\newtheorem{fac}[thm]{Fact}
\theoremstyle{definition}
\newtheorem{rem}[thm]{Remark}
\title{On compact subsets of the reals}
\subjclass[2010]{Primary: 54F65; Secondary: 26A03, 54F05}
\keywords{Cantorval, $L$-Cantorval, gaps, intervals}
\author{Wojciech Bielas}
\address{Institute of Mathematics, University of Silesia in Katowice, Bankowa~14, 40-007 Katowice}
\email{wojciech.bielas@us.edu.pl}
\email{mateusz.kula@us.edu.pl}
\email{szymon.plewik@us.edu.pl}
\author{Mateusz Kula}
\author{Szymon Plewik}
\begin{document}

\begin{abstract} Motivated by results of J. R. Kline and R. L. Moore (1919)
that a compact subset of the plane, homeomorphic to a subset of the reals, lies on the arc, we give a purely topological characterisation of compact sets of the reals. 
This allows us to reduce investigations of Cantorvals to properties of countable linear orders and to show,
applying the Mazurkiewicz--Sierpi\'nski Theorem (1920), that there exist continuum many non-homeomorphic
$L$-Cantorvals.  \end{abstract}

\maketitle
\section{Introduction}
If $X$ is a compact subset of the reals, then

	 $(\star)$. Each non-trivial component of $X$ is an arc $[a,b]$ and the set $(a,b)=[a,b]\setminus\{a,b\}$ is open.
	
\noindent J. R. Kline and R. L. Moore \cite[Theorem 1]{mk} showed that a compact subset of the plane $\bbR^2$ that satisfies condition $(\star)$, lies on an arc. 
We are impressed by  this achievement, more than 100 years old, and we have decided to  continue this.
While Kline and Moore dealt only with compact subsets of the plane, their results had been generalised,  for example  \cite{zip} and \cite{zipc}.
Here, we present  a variation  that gives  an internal characterisation of compact subsets of the reals.
Also, we have found some applications to Cantorvals.
As far as we know, the name Cantorval has been introduced by P. Mendes and F. Oliveira \cite{mo}.
The first characterization of Cantorvals was given by J. A. Guthrie and J. E. Nymann \cite{gn}.
As it is noted by S. G\l\k{a}b and J. Marchwicki \cite{mg},  a few authors pointed out the existence of some concrete Cantorvals before Guthrie and Nymann. 

By an \emph{arc} we understand a closed interval $[a,b]\subseteq\bbR$, or a homeomorphic image of a closed interval $[a,b]$, which also will be denoted by $[a,b]$.
In both cases, points $a$ and $b$ are   called \emph{endpoints} of the arc $[a,b]$.
Metric notions, which are described in \cite[pp. 248--254]{enge}, we supplement with
$$B^*(Y,\varepsilon)=\int_X Y\cup B(Y\setminus\int_XY,\varepsilon),$$
where $B(Y\setminus \int_XY,\varepsilon)$ is the $\epsilon$-ball around the set $Y\setminus\int_XY$.
If $Y$ is a compact subset of the reals $\mathbb{R}$, then any  component of $\bbR\setminus Y$ is called a $Y$-\textit{gap} and any non-trivial component of $Y$ is called a $Y$-\textit{interval}.
When the set $Y$ is fixed, we say  gap and interval instead of $Y$-\textit{gap} and $Y$-\textit{interval}, respectively.
Let us recall  the following well-known facts.
\begin{fac}
\label{fac1} If $X$ is a compact Hausdorff space, then any component $S\subseteq X$ is the intersection of the family of all clopen sets containing $S$.
\end{fac}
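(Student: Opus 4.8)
The plan is to identify $S$ with the quasi-component of any one of its points and then to prove, as is classical for compact Hausdorff spaces, that component and quasi-component coincide. Fix $x\in S$ and put $Q=\bigcap\{C\subseteq X : C\text{ is clopen and }x\in C\}$. Since $S$ is connected, any clopen set that meets $S$ must contain $S$; consequently a clopen subset of $X$ contains $S$ if and only if it contains $x$, so the family of clopen sets containing $S$ equals the family of clopen sets containing $x$, and it suffices to show $S=Q$. One inclusion is immediate: $S$ is connected and meets every clopen neighbourhood of $x$, hence is contained in each of them, so $S\subseteq Q$.

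For the inclusion $Q\subseteq S$ I would show that $Q$ is connected; as $x\in Q$ and $S$ is the component of $x$, this forces $Q\subseteq S$. Assume, towards a contradiction, that $Q=A\cup B$ where $A,B$ are nonempty, disjoint, and closed in $Q$, with $x\in A$. Being an intersection of clopen (in particular closed) subsets of $X$, the set $Q$ is closed in $X$, hence $A$ and $B$ are closed in $X$. Since a compact Hausdorff space is normal, I may choose disjoint open sets $U,V\subseteq X$ with $A\subseteq U$ and $B\subseteq V$; then $Q\subseteq U\cup V$.

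Now compactness enters. The set $X\setminus(U\cup V)$ is closed, hence compact, and it is disjoint from $Q=\bigcap_C C$, so it is covered by the open sets $X\setminus C$ as $C$ ranges over the clopen sets containing $x$; extracting a finite subcover and intersecting the corresponding sets yields a single clopen set $C_0\ni x$ with $C_0\subseteq U\cup V$. Then $C_0\cap U=C_0\setminus V$, so this set is at once open and closed in $X$, it contains $x$, and therefore $Q\subseteq C_0\cap U$. This is impossible, since it would give $\emptyset\neq B\subseteq Q\cap V\subseteq (C_0\cap U)\cap V=\emptyset$. Hence $Q$ is connected, $S=Q$, and $S$ equals the intersection of all clopen sets containing it. The only delicate point is this last step: converting the normal-space separation of $A$ from $B$ into a genuine clopen subset of $X$ separating $x$ from $B$, which is exactly where compactness of $X$ is used.
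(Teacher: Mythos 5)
Your proof is correct, and it takes the same route as the paper: the paper simply observes that components coincide with quasi-components in compact Hausdorff spaces and cites Engelking (Theorem 6.1.23), which is exactly the statement you reduce to. The difference is only that you write out the standard proof of that classical fact in full (normality to separate the pieces of the quasi-component, then compactness to extract a clopen separating set), and every step of that argument checks out.
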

\begin{proof}In compact Hausdorff spaces, components coincide with quasi-components, see  \cite[Theorem 6.1.23]{enge}.
\end{proof}
\begin{fac}
\label{fac2}
 Let $X$ be a compact Hausdorff space. If a component $S$ is contained in an open set $U$, then  there exists a clopen set $V$ such that $S\subseteq V \subseteq U$.
\end{fac}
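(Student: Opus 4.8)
The plan is to deduce this directly from Fact~\ref{fac1} together with the compactness of $X$ via the finite intersection property. First I would invoke Fact~\ref{fac1} to write $S=\bigcap\calC$, where $\calC$ denotes the family of all clopen subsets of $X$ that contain $S$. Since $S\subseteq U$, this gives
$$\bigcap_{C\in\calC}\bigl(C\setminus U\bigr)=\Bigl(\bigcap_{C\in\calC}C\Bigr)\setminus U=S\setminus U=\emptyset.$$
Each set $C\setminus U$ is closed in $X$: indeed $C$ is clopen, hence closed, and $X\setminus U$ is closed, so $C\setminus U=C\cap(X\setminus U)$ is closed. As $X$ is compact, closed subsets are compact, so we are dealing with a family of compact sets with empty intersection.

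Next I would apply the finite intersection property: a family of closed subsets of a compact space with empty intersection has a finite subfamily with empty intersection. Hence there are $C_1,\dots,C_n\in\calC$ with $\bigcap_{i=1}^n(C_i\setminus U)=\emptyset$, equivalently $\bigcap_{i=1}^n C_i\subseteq U$. Now set $V=\bigcap_{i=1}^n C_i$. A finite intersection of clopen sets is clopen, so $V$ is clopen; since each $C_i$ contains $S$ we have $S\subseteq V$; and by construction $V\subseteq U$. This $V$ is the required clopen set.

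There is no real obstacle here — the statement is essentially the compact-Hausdorff shrinking of a component inside an open neighbourhood, and the only ingredients are Fact~\ref{fac1} and compactness. The one point to state carefully is that $X$ being compact Hausdorff is exactly what makes Fact~\ref{fac1} applicable (so that the quasi-component description of $S$ holds) and simultaneously supplies the finite intersection property used to pass from the infinite family $\calC$ to a finite subfamily; neither hypothesis can be dropped.
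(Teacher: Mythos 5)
Your proof is correct and follows essentially the same route as the paper: both write $S$ as an intersection of clopen sets via Fact~\ref{fac1} and then use compactness (you phrase it as the finite intersection property for the closed sets $C\setminus U$, the paper as extracting a finite subfamily whose complements cover $X\setminus U$) to obtain a finite intersection of clopen sets, which serves as the desired $V$. No issues.
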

\begin{proof}
  In view of Fact 1, choose a family $\mathcal V$ of clopen sets such that  $S=\bigcap\mathcal V$.
  Choose a finite subfamily $\mathcal V^*\subseteq \mathcal V$ such that $X\setminus\bigcap \mathcal V^*\supseteq X\setminus  U$.
  Therefore $\bigcap\calV^*\subseteq U$.
\end{proof} 

Our notation is standard, in particular, a closed and open set  is called \emph{clopen}. For notions not defined here, see \cite{enge} and \cite{ss}.

\section{The Kline--Moore theorem revisited}

If a compact metric space $X$ satisfies condition $(\star)$, then there are at most countably many non-trivial components.
Indeed,  each non-trivial component of $X$ has non-empty interior, different components are disjoint and $X$ is separable.
	
Let $\calB_n$ be the family of all clopen sets $V\subseteq X$ for which there exists a component $S$ of $X$ such that
$$\textstyle S\subseteq V\subseteq B^*(S,\frac{1}{n}).$$
Clearly,  families $\calB_n$ are defined for any metric space $X$ and $\calB_{n+1}\subseteq\calB_{n}$.
We say that a family $\mathcal B$ of clopen sets of $X$ is a \textit{base for components}, whenever any component of $X$ can be obtained as the intersection of a family contained  in $\calB$.
If $X$ is a compact metric space, then the family $\calB_n$ is a base for components, for each $n>0$.
Indeed, if $S$ is a component of $X$, then $S=\bigcap \mathcal V$, where $\mathcal V$ is a family of clopen sets.
For each $V\in\mathcal V$, by Fact \ref{fac2},  there exists a clopen set $U_V$ such that
$$\textstyle S\subseteq U_V\subseteq V\cap B^*(S,\frac 1n).$$
Therefore $S=\bigcap\{U_V \colon V\in \mathcal V\}$ and all $U_V$ belong to $\calB_n$.

Let $X$ be a metric space and  $V\subseteq X$ be compact and  clopen.
Each of the following facts is  more or less obvious.
\begin{itemize}
\item If $V$ is the sum of finitely many components, then each such component is open and belongs to  $\calB_n$.
\item If $V$ contains infinitely many components and the diameter $\delta(V)$ is less than $\frac 1n$, then $V\in\calB_n$. Indeed, by compactness of $V$ one of the components contained in $V$ is not open, so there is a component $S\subseteq V$ and a point $x\in S\setminus\int_XS$, and $$\textstyle S \subseteq V \subseteq B(x,\frac 1n) \subseteq B^*(S,\frac{1}{n}).$$
\item If  $S$ is a component of $X$ such that $S \subseteq V \subseteq B^*(S,\frac{1}{n})$ and $W\subseteq X\setminus S$ is a clopen set, then $S\subseteq V\setminus W \in \calB_n$. 
\end{itemize}

\begin{pro}\label{pro1} Let $(X,d)$ be a compact metric space. If each $\calP_n$  is a cover consisting of clopen sets and each $\calP_n$ is a refinement of $\calB_n$, then $\bigcup\{\calP_n:n >0\}$ is a base for components.
\end{pro}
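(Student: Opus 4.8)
The plan is to fix an arbitrary component $S$ of $X$ and prove $S=\bigcap\calG$, where $\calG:=\{P\in\bigcup_{n>0}\calP_n:S\subseteq P\}$; as $S$ is arbitrary and $\calG$ is a subfamily of $\bigcup_{n>0}\calP_n$, this witnesses that $\bigcup_{n>0}\calP_n$ is a base for components. The inclusion $S\subseteq\bigcap\calG$ is trivial, and $\calG\neq\emptyset$: for each $n$ the traces $\{P\cap S:P\in\calP_n\}$ form a cover of the connected set $S$ by subsets that are relatively clopen in $S$, so one of them must equal $S$, i.e.\ some $P\in\calP_n$ contains $S$.

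For the reverse inclusion I would argue by contradiction, assuming $x\in\bigcap\calG$ with $x\notin S$. Since $S$ is compact, $\delta:=\dist(x,S)>0$. By Fact~\ref{fac1} I can choose a clopen $W$ with $S\subseteq W$ and $x\notin W$; then $W$ and $X\setminus W$ are disjoint non-empty compacta, so $\eta:=\dist(W,X\setminus W)>0$, and one checks $B(S,\eta)\subseteq W$ and $B(x,\eta)\subseteq X\setminus W$. Now fix an integer $n>\max\{1/\delta,1/\eta\}$ and pick $P\in\calP_n$ with $S\subseteq P$; then $x\in P$, because $P\in\calG$. Since $\calP_n$ refines $\calB_n$ there is $V\in\calB_n$ with $P\subseteq V$, and by the definition of $\calB_n$ there is a component $T$ with $T\subseteq V\subseteq B^*\bigl(T,\tfrac1n\bigr)=\int_X T\cup B\bigl(T\setminus\int_X T,\tfrac1n\bigr)$. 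In particular $S\cup\{x\}\subseteq B^*\bigl(T,\tfrac1n\bigr)$.

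From $x\in B^*(T,\tfrac1n)$ I would read off $\dist(x,T)<\tfrac1n$: either $x\in\int_X T$, so the distance is $0$, or $x\in B(T\setminus\int_X T,\tfrac1n)$ and $T\setminus\int_X T\subseteq T$. Because $n>1/\delta$, this already forces $T\neq S$, for $T=S$ would give $\delta=\dist(x,S)<\tfrac1n$. Hence $S$ and $T$ are distinct, thus disjoint, components, so $S\cap\int_X T=\emptyset$ and therefore $S\subseteq B(T\setminus\int_X T,\tfrac1n)$, which yields $\dist(S,T)<\tfrac1n$. Since $\tfrac1n<\eta$, the continuum $T$ meets $B(S,\eta)\subseteq W$ and also meets $B(x,\eta)\subseteq X\setminus W$; thus the disjoint clopen sets $W$ and $X\setminus W$ split $T$, contradicting its connectedness. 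This contradiction gives $\bigcap\calG=S$, which completes the proof.

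I expect the delicate step to be precisely the claim $T\neq S$, and more broadly the control of $T$: a priori the $\calB_n$-member $V$ containing $P$ is only a neighbourhood of \emph{some} component $T$, which need have nothing to do with $S$ and could be a large continuum running close to $S$ and to the far point $x$ simultaneously. The resolution is that for large $n$ this is impossible — $T\setminus\int_X T$ would have to lie within $\tfrac1n$ of all of $S$ while $T$ also reaches within $\tfrac1n$ of $x$ — and the clopen set $W$ separating $S$ from $x$ converts this into a disconnection of $T$. The remaining ingredients (disjoint compacta have positive distance, $B(S,\eta)\subseteq W$, and the trace argument producing a member of $\calP_n$ that contains $S$) are routine.
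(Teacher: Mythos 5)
Your proof is correct and takes essentially the same approach as the paper's: argue by contradiction, separate $S$ from the stray point by a clopen set (via Fact~\ref{fac1}), and use that for large $n$ the component witnessing $W_n\in\calB_n$ must be $\tfrac1n$-close to both $S$ and the stray point, which the clopen separation forbids. The only difference is cosmetic: the paper pushes the witnessing component entirely to the far side of the separator and contradicts $S\subseteq B(S_n,\tfrac1n)$ via a sequence $b_n\to a$, whereas you fix one large $n$ and contradict the connectedness of $T$ directly, which is a fine (arguably tidier) way to finish.
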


\begin{proof} Let $S\subseteq X$ be a component and, for each $n>0$, take   the clopen set $V_n$ such that $S\subseteq V_n\in\calP_n$.
So, there exists components $S_n$ such that
  $$\textstyle S\subseteq V_n\subseteq W_n \subseteq B^*(S_n,\frac 1n)\subseteq B(S_n,\frac 1n),$$
	where each $W_n\in\calB_n$ witnesses that $\calP_n$ is a refinement of $\calB_n$ and each $S_n$ witnesses that $W_n\in\calB_n$.
	We get $S=\bigcap\{V_n:n >0\}$.
        Indeed, suppose
        $$a\in \bigcap\{V_n\colon n>0\}\setminus S.$$
        Choosing   a clopen set $U\supseteq S$ such that $a\notin U$,       we have
        $$\epsilon=d(S,X\setminus U)>0.$$
Let points  $b_n\in S_n$ be such that $d(a,b_n)<\frac 1n$, therefore $\lim_{n\to\infty} b_n = a$.
For all but finitely many $n$, we have  $b_n\in S_n\subseteq X\setminus U$ and $B(S,{\epsilon})\cap S_n=\emptyset$, and consequently $S\cap B(S_n,\epsilon)=\emptyset$.
On the other hand, for sufficiently large $n$, we get $$\textstyle S\subseteq V_n\subseteq B(S_n,\frac{1}{n})\subseteq B(S_n,\epsilon);$$ a contradiction.
\end{proof}

Recall that a family $\calP$ of subsets of $X$ is called a \textit{partition}, whenever  $\bigcup\calP=X$ and $\calP$ consists of non-empty pairwise disjoint sets.

\begin{lem}
\label{lem6bis}
If  $X$ is a compact metric space, a set $V\subseteq X$ is clopen and $n>0$, then there exists a family $\calP$, consisting of pairwise disjoint clopen sets, that is a refinement of $\calB_n$ and $V=\bigcup \calP$.
In addition, if $X$ satisfies condition $(\star)$, then $\calP$ can be chosen such  that $\calP\subseteq \calB_n$.
\end{lem}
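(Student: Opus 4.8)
The plan is to build the partition $\calP$ of $V$ by a straightforward compactness argument, covering $V$ by clopen members of $\calB_n$ and then disjointifying. Concretely, for each point $x\in V$ let $S_x$ denote the component of $X$ containing $x$; since $\calB_n$ is a base for components, there is a clopen set $W_x$ with $S_x\subseteq W_x\subseteq B^*(S_x,\tfrac1n)$, and by shrinking with $V$ (using the third bullet before Proposition \ref{pro1}, intersecting $W_x$ with the clopen set $V$ and removing the clopen set $X\setminus V$ if necessary) we may assume $W_x\subseteq V$ and still $W_x\in\calB_n$. The family $\{W_x:x\in V\}$ is an open cover of the compact set $V$, so finitely many $W_{x_1},\dots,W_{x_k}$ cover $V$. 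Now disjointify in the usual way: set $P_1=W_{x_1}$ and $P_{i+1}=W_{x_{i+1}}\setminus(W_{x_1}\cup\dots\cup W_{x_i})$. Each $P_i$ is clopen, the nonempty ones are pairwise disjoint, their union is $V$, and each is contained in some $W_{x_j}\in\calB_n$, so $\calP=\{P_i:P_i\neq\emptyset\}$ is a partition of $V$ into clopen sets refining $\calB_n$. This proves the first assertion.

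For the ``in addition'' part, assume $X$ satisfies $(\star)$; I want each piece of the partition to \emph{itself} lie in $\calB_n$, not merely be refined by it. The issue with the disjointification above is that subtracting clopen sets can split a component off from the one witnessing membership in $\calB_n$, or leave a piece that straddles fragments of several components with no single component $S$ satisfying $P_i\subseteq B^*(S,\tfrac1n)$. Under $(\star)$, however, there are only countably many nontrivial components, each with nonempty interior, and every nontrivial component $[a,b]$ has $(a,b)$ open in $X$. The strategy is to treat the (finitely many) nontrivial components meeting $V$ and hitting the relevant clopen sets separately: enlarge each $W_{x_i}$, or rather re-choose the cover, so that every nontrivial component is either entirely inside a single cover element or we cut along its endpoints. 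Since each nontrivial component $[a,b]$ is an arc with $(a,b)$ open, and $V$ is clopen, $[a,b]\cap V$ is a clopen subset of $[a,b]$, hence a finite union of subarcs; shrinking the diameter below $\tfrac1n$ if needed and using that a short clopen arc containing a non-interior point of itself lies in $\calB_n$ (second bullet before Proposition \ref{pro1}), we can arrange that after disjointification each remaining piece either is a clopen union of finitely many whole components — each then open and in $\calB_n$ by the first bullet — or is a small-diameter clopen set containing infinitely many components, hence in $\calB_n$ by the second bullet, or contains a component $S$ together with a point of $S\setminus\int_X S$ with $P_i\subseteq B^*(S,\tfrac1n)$.

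I expect the main obstacle to be exactly this bookkeeping for the ``in addition'' clause: ensuring that the disjointification does not produce a stray piece that fails to be in $\calB_n$. The clean way to organize it is to first refine the finite cover $\{W_{x_i}\}$ so that no cover element separates a nontrivial component (possible because each $(a,b)$ is open and $[a,b]$ is compact, so we may split $W_{x_i}$ along the endpoints $a,b$ of any nontrivial component it meets), and then to perform the disjointification in an order that first peels off the finitely many clopen sets that are unions of whole nontrivial components. After that, every piece either consists of whole components (finitely many, so open, so in $\calB_n$) or is contained in one of the original $B^*(S_{x_j},\tfrac1n)$ in a way that the three bullet observations directly certify membership in $\calB_n$. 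The rest — verifying clopenness, disjointness, and that the union is $V$ — is routine.
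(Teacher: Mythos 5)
The first half of your proposal (cover $V$ by members of $\calB_n$ contained in $V$, take a finite subcover, disjointify) is correct and is exactly the paper's argument. The problem is the \emph{in addition} clause, which is where the real content of the lemma lies, and there your text remains a plan rather than a proof: the decisive step, ``we can arrange that after disjointification each remaining piece either \dots or \dots or \dots,'' is asserted, not established. The paper's mechanism for arranging it is a specific quantitative choice that your sketch never makes: for each non-trivial component $S=[a,b]$ one picks the clopen neighbourhood inside $B^*(S,\epsilon)\cap V$ with $\epsilon<\frac1{2n}$ \emph{and} $\epsilon<\frac12 d(a,b)$. Then, since $(\star)$ gives $S\setminus\int_X S\subseteq\{a,b\}$, any disjointified piece $W_i$ that fails to contain its witness $S_i$ (being clopen, it contains $S_i$ entirely or not at all) lies in $B^*(S_i,\epsilon)\setminus S_i$, hence in the union of the two \emph{disjoint} $\epsilon$-balls around $a$ and $b$; so $W_i$ splits into at most two clopen sets of diameter less than $\frac1n$, and each of these is handled by the bullet observations (in $\calB_n$ if it has infinitely many components, otherwise a finite union of open components, each in $\calB_n$). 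If you keep the radius $\frac1n$, as in your cover, this breaks: a leftover piece inside $B^*(S_{x_j},\frac1n)$ without its witness can have diameter up to about $\frac2n$ (or larger still, if it meets the balls around both endpoints of a short component), can contain infinitely many components, and then none of the three bullets certifies membership in $\calB_n$, nor can you necessarily cut it into small clopen pieces.

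A secondary issue is that part of your bookkeeping rests on a misconception. A clopen subset of $X$ can never ``separate'' a component or contain a ``fragment'' of one: every component is connected, so each clopen set contains it entirely or misses it. Consequently $[a,b]\cap V$ is $[a,b]$ or $\emptyset$ (not a ``finite union of subarcs''), the proposed preliminary step of ``splitting $W_{x_i}$ along the endpoints $a,b$'' is neither a well-defined operation on clopen sets nor needed, and the genuine danger in the disjointification is only the one the paper addresses: a piece may lose its witnessing component, and one must guarantee that such a piece is automatically small and sits near the non-free endpoints. That guarantee is exactly what the component-dependent choice of $\epsilon$ provides and what is missing from your argument.
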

\begin{proof}
  Fix $n>0$.
  For each component $S\subseteq V$, using Fact \ref{fac2}, choose a clopen set $V(S)$ such that
$$\textstyle S\subseteq V(S)\subseteq B^*(S, \frac1n)\cap V.$$
Thus sets $V(S)$ constitute a cover of $V$, so there exists  a finite cover $\{V(S_1),\ldots,V(S_k)\}\subseteq \calB_n$.
If
$$W_i=V(S_i)\setminus\bigcup \{V({S_j})\colon j <i\},$$
then  sets $W_i\neq \emptyset$ form a partition of $V$ that is a refinement of $\calB_n$.

Assume that condition $(\star)$ is fulfilled.
For each  non-trivial component $S$ let the sets $V(S)$ be such that
$$\textstyle S\subseteq V(S)\subseteq B^*(S, \epsilon)\cap V,$$
where $\epsilon<\frac1{2n}$ is smaller than  half the distance between the endpoints of $S$.
Let the sets $W_i$ be defined  as above.
If $S_i\subseteq W_i$, then $W_i\in\calB_n$. Otherwise $S_i\cap W_i=\emptyset$ and $W_i$ is a union of at most two disjoint clopen sets, each one of diameter less than $\frac 1n$. Any clopen set of diameter less than $\frac 1n$ with infinitely many components belongs to $\calB_n$. But, if a clopen set has finitely many components, then these components are open and belong to $\calB_n$.
Whatever the case is, there exists a partition of $W_i$ contained in $\calB_n$.
\end{proof}

\begin{lem}
\label{lem3}
In a compact metric space $X$, there exists a sequence $(\calP_n)$ of finite partitions   of clopen sets  such that each $\calP_n$ is a refinement of $\calB_n$ and  $\calP_{n+1}$ is a refinement of $\calP_{n}$.
In addition, if $X$ satisfies condition $(\star)$, then $\calP_n$ can be chosen such that $\calP_n\subseteq \calB_n$.
\end{lem}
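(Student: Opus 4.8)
The plan is to build the sequence $(\calP_n)$ by induction on $n$, at each stage applying Lemma \ref{lem6bis} inside every member of the partition constructed so far. For the base case, apply Lemma \ref{lem6bis} to the clopen set $V=X$ with $n=1$: this produces a partition $\calP_1$ of $X$ consisting of pairwise disjoint clopen sets that refines $\calB_1$, and under condition $(\star)$ one moreover gets $\calP_1\subseteq\calB_1$. Inspecting the construction in the proof of Lemma \ref{lem6bis} (the finitely many sets $W_i$, each of which is in turn split into at most two clopen pieces in the $(\star)$ case) shows that the partition produced is in fact finite, which is the form we shall use.

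For the inductive step, suppose a finite partition $\calP_n$ of $X$ consisting of clopen sets and refining $\calB_n$ has been obtained. For each $V\in\calP_n$ apply Lemma \ref{lem6bis} to the clopen set $V$ with parameter $n+1$, obtaining a finite partition $\calP(V)$ of $V$ into pairwise disjoint clopen sets that refines $\calB_{n+1}$ (with $\calP(V)\subseteq\calB_{n+1}$ when $(\star)$ holds). Set $\calP_{n+1}=\bigcup\{\calP(V)\colon V\in\calP_n\}$. Since $\calP_n$ and each $\calP(V)$ are finite, $\calP_{n+1}$ is finite; since $\bigcup\calP_n=X$ and $\bigcup\calP(V)=V$ for every $V$, we get $\bigcup\calP_{n+1}=X$; and the members of $\calP_{n+1}$ are pairwise disjoint, because distinct members of $\calP_n$ are disjoint and each $\calP(V)$ already consists of pairwise disjoint sets. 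Hence $\calP_{n+1}$ is a finite partition of $X$ consisting of clopen sets.

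It remains to verify the refinement conditions. Every member of $\calP_{n+1}$ belongs to some $\calP(V)$, hence is contained in the member $V$ of $\calP_n$, so $\calP_{n+1}$ is a refinement of $\calP_n$; and since that member also lies in $\calP(V)$, which refines $\calB_{n+1}$, the family $\calP_{n+1}$ refines $\calB_{n+1}$ (a fortiori $\calB_n$, as $\calB_{n+1}\subseteq\calB_n$). Under condition $(\star)$ we have $\calP(V)\subseteq\calB_{n+1}$ for each $V$, whence $\calP_{n+1}\subseteq\calB_{n+1}$. This closes the induction. The argument is a straightforward iteration of Lemma \ref{lem6bis}, so there is no genuine obstacle; the only points requiring care are preserving finiteness at every stage (which is why we appeal to the explicit finite partition built in the proof of Lemma \ref{lem6bis}, not merely to its statement) and refining against $\calB_{n+1}$ — rather than $\calB_n$ — when passing from step $n$ to step $n+1$, so that the partitions carry the stronger conclusion $\calP_n\subseteq\calB_n$ in the $(\star)$ case.
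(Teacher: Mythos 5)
Your proposal is correct and follows essentially the same route as the paper: an induction in which Lemma \ref{lem6bis} is applied inside each member of the previously constructed partition, and the second part of that lemma is invoked in the $(\star)$ case to get $\calP_n\subseteq\calB_n$. Your extra care about finiteness (reading it off from the explicit construction in the proof of Lemma \ref{lem6bis}) is a detail the paper leaves implicit, but it is not a different argument.
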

\begin{proof}
Using Lemma \ref{lem6bis}, define inductively partitions $\calP_1,\calP_2,\ldots$ such that $\calP_{n}$ refines $\calB_{n}$ and $\calP_{n}$ refines $\calP_{n-1}$, while $\calP_0=\{X\}$. If $X$ fulfils condition $(\star)$, then we apply the second part of Lemma \ref{lem6bis}.
\end{proof}

\begin{lem}
\label{lem8}
Let $X$ be a compact metric space and $(\calP_n)$ be a sequence of partitions as in Lemma \ref{lem3}.
If $S\subseteq X$ is a component and $S\subseteq V_n\in\calP_n$ always hold, then $S=\bigcap\{V_n: n>0\}$. 
\end{lem}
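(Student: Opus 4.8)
The plan is to reduce this to Proposition \ref{pro1}, after two bookkeeping observations. First I would check that the hypothesis determines $V_n$ uniquely: since $\calP_n$ is a partition of $X$ into clopen sets and $S$ is connected, the trace $S\cap V$ is clopen in $S$ for every $V\in\calP_n$, hence empty or all of $S$; as $\calP_n$ covers $X$ exactly one $V_n\in\calP_n$ satisfies $S\subseteq V_n$, so the equality $S=\bigcap\{V_n:n>0\}$ is unambiguous. Second, because $\calP_{n+1}$ refines $\calP_n$, the member $V_{n+1}$ lies in some member of $\calP_n$, and that member must be $V_n$ by disjointness; thus $V_1\supseteq V_2\supseteq\cdots$ and $\bigcap_n V_n$ is a decreasing intersection of non-empty compact clopen sets. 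The inclusion $S\subseteq\bigcap_n V_n$ is immediate, so only $\bigcap_n V_n\subseteq S$ needs an argument.

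For the reverse inclusion I would invoke Proposition \ref{pro1}, applied to the sequence $(\calP_n)$, which by Lemma \ref{lem3} consists of clopen covers with $\calP_n$ a refinement of $\calB_n$ (no appeal to condition $(\star)$ is needed). Its proof does more than assert that $\bigcup_n\calP_n$ is a base for components: for a component $S$ and the members $V_n\in\calP_n$ containing it, it establishes exactly $S=\bigcap_n V_n$, and we have just identified these $V_n$ with the ones in the present hypothesis. If one prefers a self-contained deduction, repeat the last paragraph of that proof: supposing $a\in\bigcap_n V_n\setminus S$, pick a clopen $U$ with $S\subseteq U$ and $a\notin U$, set $\epsilon=d(S,X\setminus U)>0$, let $W_n\in\calB_n$ with $V_n\subseteq W_n$ and let $S_n$ be the component with $S_n\subseteq W_n\subseteq B^*(S_n,\tfrac1n)$, choose $b_n\in S_n$ with $d(a,b_n)<\tfrac1n$; then for all large $n$ one has $S_n\cap B(S,\epsilon)=\emptyset$, whence $S\cap B(S_n,\epsilon)=\emptyset$, while simultaneously $S\subseteq V_n\subseteq B(S_n,\tfrac1n)\subseteq B(S_n,\epsilon)$ — a contradiction.

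I expect no real obstacle here: the genuine topological content, namely the metric convergence argument that forces the nested intersection down onto the component, was already carried out in Proposition \ref{pro1}, and the present statement merely packages that conclusion in the convenient form where the ambient covers are nested finite partitions, so that each component is recovered as the intersection of the unique decreasing branch of clopen sets it picks out. The only thing to be careful about is making the uniqueness of $V_n$ explicit, so that the assertion $S=\bigcap_n V_n$ is well posed, and keeping straight the two witnessing objects ($W_n$ for the refinement $\calP_n\to\calB_n$, and $S_n$ for membership $W_n\in\calB_n$) when one writes out the contradiction in detail.
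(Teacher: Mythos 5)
Your proof is correct and follows essentially the same route as the paper: the paper also notes that, the $\calP_n$ being partitions, the sets $V_n$ are exactly the members of $\bigcup_n\calP_n$ containing $S$, and then concludes $S=\bigcap_n V_n$ from the fact (Proposition \ref{pro1}) that this union is a base for components. Your extra remarks on uniqueness and nestedness of the $V_n$, and the optional re-run of the contradiction argument, are just more explicit versions of the same reduction.
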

\begin{proof}
Let $S\subseteq X$ be a component and $S\subseteq V_n\in\calP_n$ for each $n$.
Then the family $\mathcal U=\{V_n: n>0\}$ consists of exactly those elements of $\bigcup\{\calP_n\colon n>0\}$ that contain $S$.
We have $\bigcap\mathcal U=S$, since $\bigcup\{\calP_n\colon n>0\}$ is a base for components.
\end{proof}

If a compact metric space $(X,d)$ and $W\subseteq X$ is a clopen set, then an arc $[a,b]\subseteq W$ is said to be a \emph{core} of $W$, whenever $[a,b]$ is a non-trivial component such that 
 $$\textstyle [a,b]\subseteq W\subseteq B^*([a,b],\frac12d(a,b))\mbox{ and }(a,b)\mbox{ is open}.$$
 Not every clopen set has a core, but if it is the case, then such a core is unique.
 Indeed, if $S_1=[a_1,b_1]$ and $S_2=[a_2,b_2]$ are different cores of $W$, then $S_2\subseteq B(a_1,\frac12d(a_1,b_1))$ or $S_2\subseteq B(b_1,\frac12d(a_1,b_1))$, which implies $d(a_2,b_2)<d(a_1,b_1)$.
The interchange of $S_1$ and $S_2$ gives $d(a_1,b_1)<d(a_2,b_2)$; a contradiction.

\begin{thm}\label{thm:9}
If a compact and metric space $(X,d)$ satisfies condition $(\star)$,
then $X$ is homeomorphic to a subset of the reals.
\end{thm}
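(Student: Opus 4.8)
The plan is to construct a continuous injection $f\colon X\to[0,1]$; since $X$ is compact and $[0,1]$ is Hausdorff such an $f$ is automatically an embedding, and the theorem follows. I would begin from the sequence $(\calP_n)$ of finite clopen partitions provided by Lemma \ref{lem3} under $(\star)$ --- so $\calP_0=\{X\}$, $\calP_{n+1}$ refines $\calP_n$, and $\calP_n\subseteq\calB_n$ --- and regard $\bigcup_n\calP_n$ as a tree ordered by reverse inclusion: each $x\in X$ lies in a unique chain $X=V_0(x)\supseteq V_1(x)\supseteq\cdots$ with $V_n(x)\in\calP_n$, and by Lemma \ref{lem8} the set $\bigcap_nV_n(x)$ is exactly the component of $x$, so branches of the tree correspond to components. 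Two preparatory remarks. First, every non-trivial component $S=[a,b]$ eventually ``becomes a core'': since $B^*(S,\epsilon)$ is an open set containing $S$ and $\bigcap_nV_n(a)=S$, compactness yields an $m$ with $V_m(a)\subseteq B^*(S,\epsilon)$; taking $\epsilon<\tfrac12 d(a,b)$ and using that $(a,b)$ is open, this $V_m(a)$ has core $S$, and therefore so do all later members of the chain of $S$ (and their children containing $S$ --- the \emph{special} children). Let $n_0(S)$ be the first such index. Second, the construction of Lemma \ref{lem3} can be arranged so that for $n\ge n_0(S)$ the members of $\calP_{n+1}$ contained in $V_n(a)$ other than the special child each lie in a ball of radius less than $\tfrac12 d(a,b)$ about $a$ or about $b$ (this is exactly the situation that the proof of Lemma \ref{lem6bis} under $(\star)$ produces for the part of a clopen set lying off its core).

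Next I would assign recursively to each $V\in\calP_n$ a closed interval $\Phi_n(V)\subseteq[0,1]$, with $\Phi_0(X)=[0,1]$, so that the intervals given to the children of $V$ are pairwise disjoint, separated by gaps, and contained in $\Phi_n(V)$. When $V$ has no core I would also impose $|\Phi_{n+1}(V')|\le\tfrac12|\Phi_n(V)|$ for every child $V'$. When $V$ has core $S=[a,b]$ I would, at the level $n_0(S)$, reserve once and for all the middle third $C(S)=[c_1,c_2]$ of $\Phi_{n_0(S)}(V)$; thereafter, at each $V$ on the chain of $S$, writing $\Phi_n(V)=[\ell,r]$, I would give $[\tfrac{\ell+c_1}{2},\tfrac{c_2+r}{2}]$ to the special child and place the remaining children --- which now lie near $a$ or near $b$ --- into the left slot $[\ell,\tfrac{\ell+c_1}{2}]$ or the right slot $[\tfrac{c_2+r}{2},r]$. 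Then: along the chain of $S$ the special child's interval decreases to $C(S)$ (the excess of its length over $|C(S)|$ halving at each step) while the left and right slots shrink to the single points $c_1$ and $c_2$; and along the branch of a one-point component the passage from $n$ to $n+1$ is a \emph{halving step} --- i.e.\ $|\Phi_{n+1}(V_{n+1}(x))|\le\tfrac12|\Phi_n(V_n(x))|$, as happens at a no-core node and at any child placed in a slot --- except when the branch follows a special child, so if only finitely many halving steps occurred the branch would, from some level on, be the chain of a fixed non-trivial component and its intersection would be that arc rather than a point. Hence every one-point branch has infinitely many halving steps and $|\Phi_n(V_n(x))|\to0$ along it.

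I would then define $f$: on a one-point component $\{x\}$ let $f(x)$ be the unique point of $\bigcap_n\Phi_n(V_n(x))$; on a non-trivial component $S=[a,b]$ let $f|_S$ be any homeomorphism of the arc $S$ onto $C(S)=[c_1,c_2]$ with $f(a)=c_1$ and $f(b)=c_2$. Injectivity is immediate inside a component; for $x,y$ in distinct components one has $f(x)\in\Phi_n(V_n(x))$ and $f(y)\in\Phi_n(V_n(y))$ for every $n$ (when $x$ lies in a non-trivial component $S$ because $f(x)\in C(S)\subseteq\Phi_n(V_n(x))$), and these intervals are disjoint from the first level that separates $x$ and $y$. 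For continuity take $x_k\to x$. If the component of $x$ is a single point, each clopen $V_n(x)$ eventually contains every $x_k$, so $f(x_k)\in\Phi_n(V_n(x))$ eventually, and $|\Phi_n(V_n(x))|\to0$, whence $f(x_k)\to f(x)$. If $x\in(a,b)$ for a non-trivial component $S$, then $(a,b)$ is open --- here the full force of $(\star)$ enters --- so $x_k\in S$ eventually and $f(x_k)\to f(x)$ by continuity of $f|_S$.

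The step I expect to be the main obstacle is continuity at an \emph{endpoint}, say $x=a$, of a non-trivial component $S=[a,b]$. Points $x_k\in S$ are handled by $f|_S$; for $x_k\notin S$ let $m_k$ be the largest $m$ with $x_k\in V_m(a)$, which is finite because $\bigcap_mV_m(a)=S\not\ni x_k$ and tends to infinity because each $V_m(a)$ is a neighbourhood of $a$. For $k$ large, $m_k\ge n_0(S)$, so $V_{m_k}(a)$ has core $S$, and as $x_k$ is too close to $a$ to lie in the ball about $b$ it falls into a child of $V_{m_k}(a)$ whose $\Phi$-interval sits in the left slot of $V_{m_k}(a)$; that slot is contained in an interval of the form $[\ell_{m_k},c_1]$ with $\ell_{m_k}\to c_1$, so $f(x_k)\to c_1=f(a)$. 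Making the bookkeeping in the construction precise enough for this last argument --- ensuring that the children of a core node genuinely split into an ``$a$-part'' near $a$ and a ``$b$-part'' near $b$, with the slots shrinking to the correct endpoints of $C(S)$ --- is the delicate heart of the proof. Once continuity holds, $f$ is a continuous injection of the compact space $X$ into $[0,1]\subseteq\bbR$, hence a homeomorphism onto its image, giving the embedding.
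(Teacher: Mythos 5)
Your proposal is correct, and it is built on the same combinatorial skeleton as the paper --- the nested finite clopen partitions of Lemma \ref{lem3} with $\calP_n\subseteq\calB_n$, the uniqueness of cores, and the extra ``without loss of generality'' refinement making every child of a core node either contain the core or lie in a small ball around one of its endpoints --- but it finishes differently. The paper converts this data into coherent linear orders $\sqsubset_n$ on the partitions, orders the components lexicographically, proves that $X$ is homeomorphic to the ordered set $\mathcal{C}^*$ with its order topology (continuity being checked via preimages of rays), and then fills the countably many order gaps to place $X$ on an arc; this route also makes Remark \ref{rem10} immediate and yields the Kline--Moore style conclusion directly. You instead realize the embedding concretely as a Cantor-type interval scheme: nested closed intervals $\Phi_n(V)\subseteq[0,1]$ assigned to partition members, a reserved middle third $C(S)$ for each core, halving estimates along branches of trivial components, and slots shrinking to the endpoints $c_1,c_2$ of $C(S)$, which is exactly what replaces the paper's order-topology continuity check (your ``delicate heart'' --- continuity at a free endpoint --- corresponds to the paper's condition (2) on $\sqsubset_n$). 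Your key metric claims do hold: a non-trivial component eventually is the core of every member of its chain; a step that is not a passage to a special child halves the interval length, so along a trivial branch the lengths tend to $0$ (otherwise the branch would eventually coincide with the chain of a fixed core, contradicting Lemma \ref{lem8}); and the ball condition with radius $\tfrac12 d(a,b)$ forces points converging to $a$ from outside $S$ into $a$-side children, whose intervals lie in $[\ell_{m},c_1]$ with $\ell_m\uparrow c_1$. The bookkeeping you flag as needing care is achievable by splitting each off-core child along the two disjoint balls $B(a,\tfrac12 d(a,b))$ and $B(b,\tfrac12 d(a,b))$, which is the same level of ``modify the construction'' that the paper itself invokes. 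The trade-off: your argument avoids order topologies and gap-filling at the price of explicit metric estimates, while the paper's order-theoretic route gives the arc statement and the indifference to endpoint orientation with less arithmetic.
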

\begin{proof}
  For each non-trivial component $[a,b]$ of $X$,  choose a homeomorphism $f_{[a,b]}\colon[a,b]\rightarrow [0,1]$ such that $f_{[a,b]}(a)=0$.
  %Let $\epsilon_I=\frac13d(a_I,b_I)$.
  Let $(\calP_n)$ be a sequence of finite partitions of $X$   such that $\calP_n\subseteq \calB_n$ and $\calP_{n}$ is a refinement of $\calP_{n-1}$.
  Without loss of generality, by modifying the construction of partitions $\calP_n$ used in Lemma \ref{lem3}, assume that if $W\in\calP_{n-1}$ has the core $[a,b]$, then for any $V\in\calP_n$ such that  $V\subseteq W$, we have $V\subseteq B(a,\frac 1n)$ or  $V\subseteq B(b,\frac 1n)$, or $[a,b]\subseteq V$.
  Define (linear) strict ordering $\sqsubset_n$, each one on the family $\calP_n$, such that the following conditions hold.
\begin{enumerate}
	\item[(1)] $V_1\sqsubset_{n} V_2$, whenever $V_1,V_2\in\calP_{n}$, $V_1\subseteq W_1\in\calP_{n-1}$, $V_2\subseteq W_2\in\calP_{n-1}$ and $W_1\sqsubset_{n-1} W_2$.
	\item[(2)] If $W\in\calP_{n-1}$ has the core $[a,b]\subseteq V^*\in\calP_n$, then for any $V\in\calP_n$, such that $V\subseteq W$, we have: if $V\subseteq B(a,\frac 1n)$, then  $V\sqsubset_n V^*$; if  $V\subseteq B(b,\frac 1n)$, then  $V^*\sqsubset_n V$.
        \end{enumerate}
        
Assume inductively that the order $\sqsubset_{n}$ is already defined. Given two sets $V_1,V_2\in\calP_{n+1}$ the order $\sqsubset_{n+1}$ between them is determined by condition (1) unless they are subsets of the same set $W\in\calP_{n}$.
In such a case,  define an arbitrarily linear order on  $\{V\in\calP_{n+1}\colon V\subseteq W\}$, maintaining condition (2).

Given orders $\sqsubset_n$ we order linearly the family $\mathcal C$ of all components of $X$ as follows. 
Let $S$, $S'$ be two different components of $X$.
  Consider sequences of clopen sets $(W_n),(W_n')$ such that $W_n,W_n'\in\calP_n$ and $\bigcap_nW_n=S$, and $\bigcap_nW_n'=S'$.
  If $n$ is the smallest number such that $W_n\neq W_n'$, then let $\sqsubset$ order  $S$ and $S'$ in the same manner as $\sqsubset_n$ orders sets $W_n$ and $W_n'$.
If $\mathcal C^*$ is the union of sets $\{(S, 0)\colon S \text{ is a trivial component}\}$ and $\{(I, x) \colon I \text{ is a non-trivial component and } x\in [0,1]\}$, then equip it  with the lexicographic order $\prec$ induced by $\sqsubset$ and the standard inequality $<$.

The space $X$ is homeomorphic to $\mathcal C^*$ with the order topology.
Indeed, for any $x\in X$, put
$$F(x)=
\begin{cases}
([a,b],f_{[a,b]}(x)),&\mbox{ if }x\in [a,b]\in\mathcal C;\\
(S,0),&\mbox{ if }S=\{x\}\in \mathcal C.
\end{cases}$$
Clearly $F\colon X\rightarrow \mathcal C^*$ is a bijection.

%Let us  check that sets $F^{-1}[(F(x),\rightarrow)]$ and $F^{-1}[(\leftarrow,F(x))]$ are always open.
Fix $x\in X$ and a sequence of sets $(W_n)$ such that $x\in W_n\in\calP_n$.
If $\{x\}$ is a trivial component or is the right endpoint of a non-trivial component $[a,b]$, then
 $$F^{-1}[(F(x),\to)]=\bigcup \{V\colon V\in\calP_n\mbox{ and } W_n\sqsubset_n V\mbox{ and }n>0\}$$
 is open, being a union of open sets.
 
 But if  $I=[a,b]\subseteq X$ is a non-trivial component and $x\in [a,b)$, then we have that the set 
 $$F^{-1}[(\leftarrow,F(x)]]=[a,x]\cup\bigcup \{W\in\calP_n\colon W\sqsubset_n W_n\mbox{ and }n>0\}$$
 is closed.
Thus, $F$ is continuous and since the domain of  $F$ is compact, hence $F$ is a homeomorphism.

In the space $\mathcal C^*$ there exist at most countably many gaps, i.e. points with  immediate successors with respect to $\prec$.
Filling these gaps with copies of the interval $(0,1)$, we obtain an arc that contains a copy of $X$.
\end{proof}

\begin{rem}\label{rem10}
In the proof of the above theorem the ordering of endpoints of non-trivial components did not matter.
\end{rem}

\section{Some remarks on Cantorvals} \label{sec:1}

Authors of  the paper \cite{mo} considered the arithmetic sum of two Cantor sets on the reals.
For some classes of such sets the arithmetic sum can be of the form which they called: M-Cantorval, L-Cantorval or R-Cantorval. Each of these notions represents a compact subset of the reals  with given properties. Also, nowadays it is common to call what they meant by M-Cantorval simply Cantorval or sometimes symmetric Cantorval, see \cite[p. 16]{nit2} or compare \cite{nit1}. 
There are a few  approaches to the definition of a Cantorval. 
For example, a \textit{Cantorval} is a nonempty compact subset $K\subseteq \mathbb R$   such that
 $K$ is the closure of its interior and 
 endpoints of any non-trivial component of $K$ are limit points
of trivial  components of $K$, see \cite[Definition 13]{nit2}.
Let us describe a specific Cantorval, compare \cite[p. 354]{bfpw}. Consider the classical  ternary Cantor set and let $G_n$ be the union of $2^{n-1}$ open intervals removed at the $n$-th step of its construction, i.e. 
 $$\textstyle G_1=\left(\frac{1}{3},\frac{2}{3}\right) \subseteq \mathbb R \mbox{  and } G_{n+1}=\frac{1}{3}G_n\cup\left(\frac{1}{3}G_n+\frac{2}{3}\right).$$ The set $$C=[0,1]\setminus\bigcup_{k\in\bbN}G_{2k}$$
 is a Cantorval and we will call it \textit{model Cantorval}. As it was shown, see   \cite[p. 343]{mo}, \cite[p. 16]{nit2}, \cite[p. 355]{bfpw}, \cite[Theorem 1.3]{prus} or \cite[Wniosek 26.]{wal}, any two Cantorvals are homeomorphic.
Topological characterizations of a Cantorval are given in \cite[Theorem 21.18.]{bfpw} and \cite[pp. 330/331]{mo}.
Another  characterization can be also extracted from the proofs of Guthrie--Nymann's classification theorem on sets of subsums of a convergent series of real numbers, see \cite[Theorem 1.(iii)]{gn}.

Following the paper \cite{mo},  a  subset $X$ of the reals is an \emph{L-Cantorval} [respectively \emph{R-Cantorval}] whenever it is nonempty, compact and any gap has an interval adjacent to its right  [left] and is accumulated on the left [right] by infinitely many intervals and gaps.
Thus, if $X$ is an L-Cantorval, then $\min X$ is the endpoint of the $X$-interval.
Directly from the definition, we get that the map $x\mapsto -x$ carries an L-Cantorval onto the R-Cantorval and vice versa.
Also, if  $[a,b]$ is an interval in an L-Cantorval, then $b$ cannot be an endpoint of any gap, but $a$ is not necessarily  an endpoint of a gap, i.e. $a$ can be accumulated by infinitely  many gaps and intervals.
It is known that there exist non-homeomorphic L-Cantorvals, see the last paragraph of Appendix in \cite{mo}.
As we show, there exist continuum many non-homeomorphic L-Cantorvals, see Theorem \ref{thm11}.

We say that an endpoint $e$ of a non-trivial component $I\subseteq X$ is  \textit{free}, whenever $X$ satisfies condition $(\star)$ and $e\in\int_X I$. A set $X\subseteq \bbR$ is called \textit{left-oriented}, whenever any $X$-interval with exactly one free endpoint has the left endpoint free.
If $(a,b)$ is an $X$-gap such that  the point $b$ is a limit point of free left endpoints of $X$-intervals, then   $(a,b)$ is called  an \emph{inappropriate} $X$-gap.
If $X$ is left-oriented, each   non-empty clopen subset of $X$ contains a non-trivial component with exactly one free endpoint, $\{b\}$ is a trivial component and $b$ is the right endpoint of an $X$-gap $(a,b)$, then $(a,b)$ is an inappropriate gap.
If $X\subseteq \bbR$,  then we say that a clopen subset $W\subseteq X$ is \textit{segment-like} with respect to $X$, if $W=X\cap I$ where $I$ is a closed interval and $\min W$ belongs to an $X$-interval.

Assume that $X\subseteq\bbR$ is a compact set and $(a,b)$ is an inappropriate  $X$-gap.  We say that a pair consisting of a function $f\colon X\rightarrow f[X]$ and a family
$\calP=\{V_k\colon k\geqslant 0\}$
of non-empty clopen sets  is a \textit{correction} of $(a,b)$ with respect to $\epsilon>0$, if the following conditions are fulfilled.
\begin{itemize}
		\item $\bigcup\{V_k\colon {k\geqslant 0}\}=X\cap (b, b+\eta)$ where $\eta<\min\{b-a,\epsilon\}$.
	\item $\max V_{k+1}<\min V_k$ and each $V_k$ is segment-like.
	\item $$f(x)=
	\begin{cases}x-\delta_k, &x\in V_k;\\
	x,&x\in X\setminus \bigcup\{V_k\colon {k>0}\},
	\end{cases}$$ where $\delta_k=\max V_k+\min V_k-2b$.
      \end{itemize}
Clearly, if $f$ is a correction of $(a,b)$ with respect to $\epsilon$, then 
	 $f$ is a homeomorphism,
	 for any $x\in X$, we have $0\leqslant x-f(x)< 2\epsilon$ and if $X$ is left-oriented, then $f[X]$ is left-oriented. Also, the $f[X]$-gap $(b, \min V_0)$ is not  inappropriate.  

\begin{thm}\label{thm11}
  Let $(X,d)$ be a compact metric space that satisfies condition $(\star)$.
If each   non-empty clopen subset of $X$ contains a non-trivial component with exactly one free endpoint, then $X$ is homeomorphic to an L-Cantorval.
\end{thm}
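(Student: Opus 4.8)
The plan is to realise $X$ inside $\bbR$ as a left-oriented set, then to repair its inappropriate gaps one at a time by corrections, and finally to read off that the resulting set is an L-Cantorval. For Step~1, Theorem~\ref{thm:9} gives an embedding of $X$ into $\bbR$, and by Remark~\ref{rem10} I may prescribe the orientation of each non-trivial component freely; since being a free endpoint is intrinsic to $X$, I choose the embedding so that every $X$-interval with exactly one free endpoint has it on the left, so that the image $X_0$ is left-oriented. I will use repeatedly that the hypothesis — each non-empty clopen set contains a non-trivial component with exactly one free endpoint — is topological, so it holds in every homeomorphic copy of $X$; in particular it forbids isolated points (a singleton is clopen) and forbids any interval with two free endpoints (such an interval is clopen, hence a clopen set whose sole component is a non-trivial component with two free endpoints). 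Together with left-orientedness this already yields that no $X_0$-interval has a free right endpoint.

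\emph{Step 2: iterating corrections.} First, a correction of an inappropriate gap $(a,b)$ with respect to any $\epsilon>0$ does exist: inappropriateness means free left endpoints of intervals accumulate at $b$, and as $(a,b)$ is a gap these endpoints lie to the right of $b$, so infinitely many intervals, hence infinitely many gaps, approach $b$ from the right; cutting $X\cap(b,b+\eta)$ along gaps sitting just to the left of interval left endpoints makes each block segment-like. Now I define homeomorphisms $f_n\colon X_{n-1}\to X_n$, starting from $X_0$: if $X_{n-1}$ has no inappropriate gap, put $f_n=\id$; otherwise let $f_n$ be a correction, with respect to $\epsilon_n$, of the longest inappropriate gap of $X_{n-1}$ (leftmost in case of ties; such a longest one exists since gap lengths are summable), the $\epsilon_n>0$ being chosen with $\sum_n\epsilon_n$ finite and decreasing fast. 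Put $g_n=f_n\circ\cdots\circ f_1$. Since a correction moves $X$-points only inside the interval $(b_n,b_n+\eta_n)$ of length $\eta_n<\epsilon_n$, and by less than $2\epsilon_n$, one gets $\bigl||g_n(u)-g_n(v)|-|u-v|\bigr|<2\sum_{i\le n}\epsilon_i$ for all $u,v$, so $(g_n)$ converges uniformly to a continuous $g\colon X_0\to\bbR$; since moreover $g$ restricted to each component of $X_0$ is a genuine translation, one deduces that $g$ is injective, hence a homeomorphism onto $X_\infty:=g[X_0]$.

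\emph{Step 3: the limit is an L-Cantorval.} Each correction is built so that, in place of the destroyed gap $(a,b)$, only gaps whose right endpoint is a free left endpoint of an interval remain — hence never inappropriate, then or later — while every surviving gap keeps its status and no genuinely new inappropriate gap appears; so each correction reduces the number of inappropriate gaps by one, and an inappropriate gap that is never corrected only grows, staying longer than a fixed positive bound. As $\sum\epsilon_i<\infty$, only finitely many gaps ever reach a prescribed length and each is corrected at most once, so no inappropriate gap escapes: $X_\infty$ has none. It is left-oriented, corrections preserving this and $g$ translating components. Finally I apply the observations recorded before the theorem to the left-oriented $X_\infty$, which satisfies the hypothesis and has no inappropriate gap: for an $X_\infty$-gap $(a,b)$ the endpoint $b$ is not isolated, is not the right endpoint of an interval (no free right endpoints), and is not a trivial component (that would make $(a,b)$ inappropriate by the criterion stated before the theorem), so $b$ is the left endpoint of an interval adjacent to $(a,b)$ on the right; symmetrically $a$ is a non-isolated trivial component, not the right endpoint of an interval, and the hypothesis applied to small clopen neighbourhoods of $a$ forces $a$ to be approached from the left by infinitely many non-trivial intervals, hence by infinitely many gaps. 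Thus $X_\infty$ is an L-Cantorval homeomorphic to $X$. I expect the main obstacle to be Step~2: verifying that the infinitely many corrections assemble into an honest homeomorphism — the injectivity of $g$ is delicate because a correction can reverse the order of the small blocks it relocates — and that the scheduling is fair enough that every inappropriate gap, including those produced by earlier corrections, is eventually and permanently repaired.
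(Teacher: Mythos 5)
Your overall strategy is the same as the paper's: embed $X$ in $\bbR$ as a left-oriented set via Theorem~\ref{thm:9} and Remark~\ref{rem10}, repair inappropriate gaps by a summable sequence of corrections, and read off the L-Cantorval conditions in the limit; your longest-gap-first scheduling is a legitimate substitute for the paper's fixed enumeration of the original inappropriate gaps, and your closing verification (left-orientedness plus the clopen hypothesis force an interval adjacent to the right of every gap and accumulation of intervals and gaps on the left) is actually more explicit than the paper's. The first genuine gap is the one you flag yourself and then do not close: injectivity of the limit map $g$. That $g$ is a translation on each component gives injectivity on components only; two distinct components (or a component and a trivial one) could collide in the limit, since later corrections move points leftwards by amounts that are not controlled relative to the distance separating them, and the blocks are even reordered. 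The paper closes exactly this hole by carrying along the partitions $\calP_n$ built in items (1)--(4) of its construction: once $g_n(x_0)$ lies in some $V\in\calP_n$ while $g_n(x_1)$ does not, all later images stay inside, respectively outside, $[\min V,\max V]$, so the limits differ. You have no analogue of this barrier bookkeeping, so your $g$ is only shown to be a continuous surjection onto $X_\infty$, not a homeomorphism.

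The second gap is the inference from \enquote{every inappropriate gap is corrected at some finite stage, and corrected or newly created gaps are never inappropriate at later finite stages} to \enquote{$X_\infty$ has no inappropriate gap}. Inappropriateness of an $X_\infty$-gap $(c,d)$ is a property of the limit set: corrections keep depositing intervals with free left endpoints just to the left of the right endpoints of the gaps they correct, and at stage $n$ these data are only within $O\bigl(\sum_{i>n}\epsilon_i\bigr)$ of their limit positions, so \enquote{not inappropriate at each finite stage} does not pass to the limit for free --- one must rule out that such deposits accumulate, only in the limit, from the right at the right endpoint $d$ of a surviving gap. This is precisely what the paper's final paragraph does: it assumes $(c,d)$ is an inappropriate $X_\infty$-gap, tracks $d_n=f_n\circ\cdots\circ f_1(h^{-1}(d))$, and derives a contradiction by locating the stage (with $2^{-n}<d-c$) at which the point, or the segment-like block carrying it, is moved, showing points would then land inside $(c,d)$ or stay to the right of $d$. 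Your counting argument ends with the finite stages; you still owe this limit argument, so the claim \enquote{$X_\infty$ has none} is unproved as written.
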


\begin{proof}
  By Theorem \ref{thm:9} and Remark \ref{rem10}, we can assume that $X\subseteq\bbR$ and $X$ is left-oriented.
Assume that  $(a_1,b_1),(a_2,b_2),\ldots$ is an enumeration of all inappropriate $X$-gaps.
Let  $f_1\colon X\to X_1$ and   $\calP_1^*=\{V_k\colon k\geqslant 0\} $ be a correction of the $X$-gap $(a_1,b_1)$ with respect to $\epsilon=1$. Put $\calP_1= \{f_1[V_k]\colon k>0\}\cup \{ (-\infty, a_1] \cap X, [\min V_0, +\infty)\cap X\}$.

For  $i<n$, assume inductively, that there are already defined families $\calP_i$ of clopen sets in $X_i$ and homeomorphisms $f_i\colon X_{i-1}\to X_i\subseteq\bbR$.
% such that each pair $f_i$ and $\calP_i^*$ is a correction of the $X_{i-1}$-gap   with respect to $2^{-i}$. 
Let $f_n\colon X_{n-1}\to X_n$ and a family $\calP^*=\{V_k\colon k\geqslant 0\}$ be a correction of an $X_{n-1}$-gap $(p_n,q_n)$ with respect to $2^{-n}$, where $$(p_{n},q_{n})=(f_{n-1}\circ \ldots\circ f_1(a_n),f_{n-1}\circ \ldots\circ f_1(b_n)).$$
Choose a family  $\calP_n$, which consists of:
\begin{enumerate}
\item all sets from $\calP_{n-1}\setminus\{W\}$, where $p_n\in W\in \calP_{n-1}$;
\item the set $W\cap[\min V_0,\max W]$;
\item the set $W\cap[\min W,p_n]$;
\item  sets $f_n[V_k]\subseteq (p_n,q_n)$, for $k>0$.
\end{enumerate}
Since each $f_n$ is a correction of the $X_{n-1}$-gap $(p_{n},q_{n})$ with respect to $\epsilon=2^{-n}$, then  the sequence $(f_n\circ \ldots\circ f_1)$ is uniformly convergent to the continuous surjection $h\colon X\to X_\infty$.

Suppose $x_0,x_1\in X$, $x_0\neq x_1$ and $h(x_0)=h(x_1)$.
Without loss of generality, we can assume that $f_n\circ\ldots\circ f_1( x_0)\in V\in\calP_n$ and $f_n\circ\ldots\circ f_1( x_1)\notin V$, where $n>0$.
Therefore $f_m\circ\ldots\circ f_1(x_0)\in [\min V,\max V]$ and $f_m\circ\ldots\circ f_1(x_1)\notin [\min V,\max V]$ for each $m\geqslant n$, which is in conflict with $h(x_0)=h(x_1)$.
Thus, $h$ is a continuous bijection, i.e. $h\colon X\to h[X]=X_\infty\subseteq\bbR$ is a homeomorphism.

If $J$ is an arbitrary $X$-interval, then  $f_{n+1}\circ\ldots\circ f_1|_J=f_n\circ\ldots\circ f_1|_J$ for sufficiently large $n$.
Therefore $X_\infty$ is left-oriented, since each $f_n\circ\ldots\circ f_1$ preserves this property.

Suppose that $(c,d)$ is an inappropriate $X_\infty$-gap.
If
$$d_n=f_n\circ\ldots\circ f_1(h^{-1}(d)),$$
then the sequence $(d_n)$  converges to $d$ and there exists $n$ such that $d_{n+1}<d_n$, since $h^{-1}(d)$ cannot be the right endpoint of an inappropriate $X$-gap.
Then $d_n\in V_k$, where the set $V_k$ is segment-like and is moved to the left by $f_n$.
If $2^{-n}<d-c$, then we get $d<\min V_k<d_m$ for each $m\geqslant n$, a contradiction.
Hence the proof is finished, since we show that $X_\infty$ is an $L$-Cantorval.
\end{proof}

For a compact set $X\subseteq\bbR$, we adopt the notation from \cite{wal}.
Namely, let $\calA_{X}$ be the family of all $X$-gaps and $\calB_X$ be the family of all $X$-intervals.  Thus $\calA_{X}$ consists of pairwise disjoint open intervals,  $\calB_X$ consists of pairwise disjoint closed intervals and both families are countable. Note that $\bigcup(\calA_X\cup\calB_X)$ is a dense subset of $\bbR$. Indeed, if $x\in \bbR \setminus \bigcup(\calA_X\cup\calB_X)$, then $\{x\}$ is a component of $X$, so $x$ is the limit point of $\bigcup \calA_X$.   
The union $\calA_X\cup\calB_X$ can be naturally equipped with an order, namely $I< J$ whenever  $x< y$ for all $x\in I$ and $y\in J$.

\begin{pro}\label{p1} Let $X$ and $Y$ be compact subsets of the reals. 
There exists an increasing homeomorphism $f\colon \bbR \to \bbR$ such that $f[X] =Y$ if and only if there exists an order isomorphism $F\colon \calA_X\cup\calB_X\rightarrow\calA_Y\cup\calB_Y$ such that $F[\calA_X]=\calA_Y$.
\end{pro}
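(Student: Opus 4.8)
The statement is a characterization: an order-preserving homeomorphism of $\bbR$ carrying $X$ onto $Y$ exists iff there is an order isomorphism between the gap-interval structures that respects the distinction between gaps and intervals. Let me think about both directions.

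**Forward direction (easy):** If $f:\bbR\to\bbR$ is an increasing homeomorphism with $f[X]=Y$, then $f$ maps connected components of $\bbR\setminus X$ to connected components of $\bbR\setminus Y$, i.e. it carries $X$-gaps to $Y$-gaps bijectively. Also $f$ carries components of $X$ to components of $Y$, and since $f$ is a homeomorphism it carries non-trivial components (arcs) to non-trivial components, so $X$-intervals go to $Y$-intervals. Since $f$ is increasing, the induced map $F$ on $\calA_X\cup\calB_X$ preserves the order "$I<J$". So $F$ is the required order isomorphism with $F[\calA_X]=\calA_Y$.

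**Reverse direction (the main work):** Given the order isomorphism $F$ with $F[\calA_X]=\calA_Y$, hence also $F[\calB_X]=\calB_Y$, I want to build $f$. The idea is to define $f$ first on $\bigcup(\calA_X\cup\calB_X)$ — a dense subset of $\bbR$ by the remark preceding the proposition — and then extend by continuity/monotonicity. On each $X$-gap $G=(a,b)$ I send it affinely (increasingly) onto $F(G)=(a',b')$; on each $X$-interval $[c,d]$ I send it affinely onto $F([c,d])=[c',d']$. I need to check these pieces glue into a single increasing function on $\bigcup(\calA_X\cup\calB_X)$: if $I<J$ are adjacent (share an endpoint, or $I,J$ both in the structure with nothing between) then the right end of $f|_I$ and the left end of $f|_J$ agree — this follows because $F$ preserves order and the betweenness structure, so $F(I)$ and $F(J)$ are correspondingly adjacent in $\calA_Y\cup\calB_Y$. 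The resulting $f$ is strictly increasing on a dense set $D=\bigcup(\calA_X\cup\calB_X)$ with dense image $\bigcup(\calA_Y\cup\calB_Y)$. A strictly increasing function from a dense subset of $\bbR$ onto a dense subset of $\bbR$ extends uniquely to an increasing homeomorphism $f:\bbR\to\bbR$ (define $f(x)=\sup\{f(t):t\in D,\ t\le x\}$; surjectivity and continuity come from density of the image, injectivity from strict monotonicity). Finally I must verify $f[X]=Y$: since $f[D]$ is the union of $Y$-gaps and $Y$-intervals and $f$ is an increasing bijection of $\bbR$, $f$ carries $\bbR\setminus X=\bigcup\calA_X$ onto $\bigcup\calA_Y=\bbR\setminus Y$, hence $f[X]=Y$.

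**Where the difficulty lies.** The forward direction and the affine-piece construction are routine. The subtle point is the gluing and boundedness: I must ensure $D$ is genuinely dense (given, via the remark) and that the family of affine maps is compatible at \emph{all} accumulation points, not merely at shared endpoints — e.g. an endpoint $a$ of an $X$-interval that is accumulated from one side by infinitely many gaps and intervals. Here I should argue that $F$ preserves not just the linear order on $\calA_X\cup\calB_X$ but the induced order-completion structure, so the supremum/infimum of the $f$-images of the pieces below (resp. above) a given point of $\bbR$ is well-defined and consistent; this is exactly what makes the extension formula $f(x)=\sup\{f(t):t\in D,\ t\le x\}$ well-defined and monotone. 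I would also remark that one may choose the affine maps with controlled scaling to keep $f$ finite, but since we only need \emph{some} increasing homeomorphism, the supremum construction handles this automatically once one checks $f$ maps all of $\bbR$ onto $\bbR$ (which follows from the image being dense and $f$ being monotone and, on $D$, a bijection onto a dense set whose complement in $\bbR$ is $Y$, a nowhere dense — in fact the pieces exhaust a dense set — so no interval of the range is missed).
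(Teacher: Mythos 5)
Your proposal is correct and follows essentially the same route as the paper: the forward direction via $F(I)=f[I]$, and the converse by choosing an increasing homeomorphism (the paper's $f_I$, your affine map) from each gap or interval onto its $F$-image and then uniquely extending the resulting increasing bijection between the dense sets $\bigcup(\calA_X\cup\calB_X)$ and $\bigcup(\calA_Y\cup\calB_Y)$ to an increasing homeomorphism of $\bbR$. The extra ``gluing at accumulation points'' you worry about is automatic once the piecewise map is an increasing bijection between dense unbounded subsets, which is exactly the point the paper relies on.
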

\begin{proof}If $f\colon \bbR \to \bbR$ is an increasing homeomorphism such that $f[X] =Y$, then  $F(I)= f[I]$ is a required homeomorphism.  

Conversely, assume $F\colon \calA_X \cup \calB_X \to \calA_Y \cup \calB_Y$ is an   isomorphism that carries $\calA_X$ onto $\calA_Y$. If $I \in \calA_X \cup \calB_X$, then let $f_I\colon I \to F(I)$ to be an increasing  homeomorphism.  For each $x\in  I$, put  $f(x) =f_I(x)$. Sets $\bigcup (\calA_X \cup \calB_X)$ and $\bigcup (   \calA_Y \cup \calB_Y)$ are dense in $\bbR$, so  already defined function $f$ is na increasing bijection   between dense subsets of the reals. This bijection can be uniquely extended to a desired homeomorphism.    
 \end{proof}

 If $C\subseteq\bbR$ is a Cantor set, then $\calB_C=\emptyset$ and $\calA_C$ is densely ordered. Any two countable densely ordered sets with the first and the last elements are isomorphic, since they are isomorphic to $\mathbb Q\cap[0,1]$.
 From Proposition \ref{p1} it follows that any compact set $X\subseteq\bbR$ such that $\calB_X=\emptyset$ and $\calA_X$ is densely ordered, is an image of the Cantor set by a homeomorphism of $\bbR$.
In other words, we get a well-known property  of the Cantor set, i.e. if a non-empty compact set $X\subseteq\bbR$ is dense in itself and $\inte_\bbR X=\emptyset$, then $X$ is the image of the Cantor set by an autohomeomorphism of $\bbR$.

Let $\bbQ\cap[0,1]=A_\bbQ\cup B_\bbQ$ be a partition such that $A_\bbQ$ and $B_\bbQ$ are dense in $\bbQ\cap[0,1]$ and $\{0,1\}\subseteq A_\bbQ$.

\begin{lem}\label{l1}
  If $(X,\sqsubset)$ is a countable linear order such that $X=A_X\cup B_X$ and $\min X\in A_X$, and $\max X\in A_X$, and the sets $A_X,B_X$ are disjoint and dense in $X$, then there exists an isomorphism $F$ between $(X,\sqsubset)$ and $(\bbQ\cap[0,1],\leqslant)$ such that $F[A_X]=A_\bbQ$.
\end{lem}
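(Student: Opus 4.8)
The plan is to prove this by a back-and-forth argument, imitating Cantor's classical proof that any two countable dense linear orders without endpoints are isomorphic, but carrying along the extra bookkeeping that the distinguished "$A$-part'' is respected. First I would fix enumerations $X=\{x_0,x_1,\dots\}$ and $\bbQ\cap[0,1]=\{r_0,r_1,\dots\}$, and build the isomorphism $F$ in stages, at each stage having a finite partial order-isomorphism $F_s$ between a finite subset of $X$ and a finite subset of $\bbQ\cap[0,1]$ which additionally maps $A_X\cap\dom F_s$ into $A_\bbQ$ and $B_X\cap\dom F_s$ into $B_\bbQ$. The union $F=\bigcup_s F_s$ is then an order isomorphism with $F[A_X]=A_\bbQ$, $F[B_X]=B_\bbQ$, provided the construction is total and surjective.

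The key point in the back-and-forth step is that whenever we must extend the domain by a new point $x\in A_X$ lying in some open interval $(u,v)$ determined by the finitely many points already placed (with $u,v\in\dom F_s\cup\{-\infty,+\infty\}$, or rather with the roles of $\min X,\max X\in A_X$ handling the extreme cases), we need a point of $A_\bbQ$ strictly between $F_s(u)$ and $F_s(v)$; this is available precisely because $A_\bbQ$ is dense in $\bbQ\cap[0,1]$, hence dense in every such subinterval, and the interval is non-degenerate since $X$ has no isolated cuts of that kind — more carefully, density of $A_\bbQ$ in the rationals gives a rational in $A_\bbQ$ in any non-empty open interval of reals, and the relevant interval $(F_s(u),F_s(v))$ is non-empty because $u\sqsubset x\sqsubset v$ forces $F_s(u)<F_s(v)$. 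Symmetrically, a new point $x\in B_X$ is matched to a point of $B_\bbQ$ in the corresponding gap, using density of $B_\bbQ$. Going the other way, given a target $r_k\in\bbQ\cap[0,1]$ not yet in the range, we locate it in a gap between consecutive range-points, pull that gap back to a non-degenerate open interval $(u',v')$ in $X$ (non-degenerate because the sets $A_X$ and $B_X$ are dense in $X$, so $X$ itself is densely ordered and has no gaps of length one), and pick a new point of $X$ there of the correct type: if $r_k\in A_\bbQ$ we use density of $A_X$ in $X$ to find a point of $A_X$ in $(u',v')$, and if $r_k\in B_\bbQ$ we use density of $B_X$.

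The endpoint condition $\min X,\min(\bbQ\cap[0,1])=0$ and $\max X,\max(\bbQ\cap[0,1])=1$, together with $\{0,1\}\subseteq A_\bbQ$ and $\min X,\max X\in A_X$, is handled by initializing $F_0$ to send $\min X\mapsto 0$ and $\max X\mapsto 1$; thereafter every new point of $X$ is interior, every new point of $\bbQ\cap[0,1]$ is interior, and the intervals produced in the back-and-forth are genuine open subintervals of $(0,1)$, so all the density appeals apply. The main obstacle — really the only thing one must be careful about — is making sure the intervals that arise are always non-degenerate so that the density hypotheses can be invoked; once one observes that $A_X,B_X$ dense in $X$ forces $(X,\sqsubset)$ to be a dense linear order (between any $p\sqsubset q$ there is a point of $A_X$, hence a point strictly between), and similarly on the rational side $A_\bbQ,B_\bbQ$ dense makes every non-empty open interval of $\bbQ\cap[0,1]$ meet both, the construction goes through routinely. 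I would close by noting that the resulting $F$ is an order isomorphism with $F[A_X]=A_\bbQ$ (and automatically $F[B_X]=B_\bbQ$), which is exactly the assertion.
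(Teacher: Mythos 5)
Your proposal is a correct and carefully spelled-out back-and-forth construction, which is exactly the approach the paper takes (its proof consists of the single line ``Adopt the well-known back-and-forth method''). No gaps: the density of $A_X,B_X$ (resp.\ $A_\bbQ,B_\bbQ$) and the initialization at the endpoints handle all cases, just as you argue.
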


\begin{proof}
Adopt the well-known back-and-forth method.
\end{proof}

The next corollary is a variant of a result by Marta Walczy\'nska, compare \cite[Wniosek 26]{wal}.

\begin{cor}
  If a set $X\subseteq \bbR$ is non-empty, compact and regularly closed such that no  $X$-interval has a free endpoint, then there exists a homeomorphism  $f\colon\bbR\to\bbR$ such that the image $f[X]$ is the model Cantorval.
\end{cor}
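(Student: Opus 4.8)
The plan is to reduce the statement to Proposition~\ref{p1} and Lemma~\ref{l1}. Write $C$ for the model Cantorval, and recall that every compact $X\subseteq\bbR$ satisfies condition~$(\star)$. Since $C$ is regularly closed and the endpoints of its non-trivial components are limit points of trivial components --- hence are not interior to their own component in the relative topology, so are not free --- the set $C$ itself satisfies the hypotheses of the corollary. Thus it suffices to prove: whenever $X\subseteq\bbR$ satisfies those hypotheses, the countable linear order $(\calA_X\cup\calB_X,<)$ with the partition into $\calA_X$ and $\calB_X$ meets the hypotheses of Lemma~\ref{l1}, with $\calA_X$ in the role of the distinguished family $A_X$. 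Granting this, Lemma~\ref{l1} gives an isomorphism $F_X\colon\calA_X\cup\calB_X\to\bbQ\cap[0,1]$ with $F_X[\calA_X]=A_\bbQ$; the same applied to $C$ gives $F_C$; and $F_C^{-1}\circ F_X$ is an order isomorphism $\calA_X\cup\calB_X\to\calA_C\cup\calB_C$ carrying $\calA_X$ onto $\calA_C$, so Proposition~\ref{p1} produces an increasing homeomorphism $f\colon\bbR\to\bbR$ with $f[X]=C$, which is in particular a homeomorphism.

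So fix $X$ as in the statement. The families $\calA_X$ and $\calB_X$ are countable and disjoint --- the first lies in $\bbR\setminus X$, the second in $X$ --- and $\calA_X\cup\calB_X$ has least element $(-\infty,\min X)$ and greatest element $(\max X,+\infty)$, both in $\calA_X$. What remains is density of $\calA_X$ and of $\calB_X$ in $\calA_X\cup\calB_X$, which in particular forces the order to be dense. First I would record two consequences of $X$ being regularly closed: $X$ has no isolated point, and if an open interval meets $X$ then it already meets $\inte_{\bbR}X$. Next I would rephrase the hypothesis on free endpoints as: an $X$-interval is never immediately adjacent to an $X$-gap on the side of one of its endpoints; consequently, if $\beta$ is the right endpoint of an $X$-interval, or the right endpoint of an $X$-gap, then $\beta\neq\max X$ and $\beta$ is a limit of $X$-gaps from the right --- and symmetrically on the left.

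Now take $I<J$ in $\calA_X\cup\calB_X$ and put $\beta=\sup I$, $\gamma=\inf J$; both belong to $X$. One checks $\beta<\gamma$: if $\beta=\gamma$, then this common point is either an isolated point of $X$ (when $I$ and $J$ are both gaps) or a free endpoint of whichever of $I$, $J$ is an $X$-interval. Since $\beta$ is the right endpoint of $I$, it is a limit of $X$-gaps from the right, so there is an $X$-gap $G$ with $\beta\leqslant\inf G$ and $\sup G\leqslant\gamma$; hence $I<G<J$, establishing density of $\calA_X$. For $\calB_X$: the interval $(\beta,\gamma)$ meets $X$ --- otherwise $(\beta,\gamma)$ itself would be an $X$-gap, immediately adjacent to $I$ on the left and to $J$ on the right, which again produces an isolated point of $X$ or a free endpoint of an $X$-interval. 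Therefore $(\beta,\gamma)$ meets $\inte_{\bbR}X$, so it contains a non-trivial component $K$, and the same free-endpoint and isolated-point arguments give $\beta<\inf K$ and $\sup K<\gamma$, i.e.\ $I<K<J$. This verifies all hypotheses of Lemma~\ref{l1} and finishes the proof as described.

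The step I expect to be the main obstacle is the density of $\calB_X$: this is exactly where both standing hypotheses are genuinely used. For a general regularly closed compact set two $X$-intervals may be separated by a single $X$-gap, and it is precisely the absence of free endpoints that excludes this and forces a non-trivial component to lie between any two members of $\calA_X\cup\calB_X$.
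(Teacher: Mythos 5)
Your proposal is correct and follows essentially the same route as the paper: verify that $\calA_X$ and $\calB_X$ are dense in $(\calA_X\cup\calB_X,\leqslant)$ using regular closedness and the absence of free endpoints, then apply Lemma~\ref{l1} (to $X$ and to the model Cantorval) and Proposition~\ref{p1}. You merely spell out the density and adjacency checks that the paper dismisses as clear, and make explicit the composition $F_C^{-1}\circ F_X$ that the paper leaves implicit.
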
 
\begin{proof}
Clearly, the interior $\int_\bbR X$  is dense in $X$ and no $X$-gap and $X$-interval have a common endpoint.
Also, between any two $X$-gaps lies an $X$-interval, between an $X$-gap and an $X$-interval lies an $X$-gap, and also between any two $X$-intervals there is an $X$-gap.
Therefore sets $\calA_X$ and $\calB_X$ are dense in $\calA_X\cup\calB_X$, i.e. between any two  elements of $\calA_X\cup\calB_X$ lie an element of $\calA_X$ and an element of $\calB_X$.
By Lemma \ref{l1} there exists an order isomorphism  $F\colon\calA_X\cup\calB_X\rightarrow\calA_C\cup\calB_C$ such that $F[\calA_X]=\calA_C$, where $C$ is the model Cantorval.
By Proposition \ref{p1} there exists a homeomorphism $f$ such that the image $f[X]$ is the model Cantorval.
  \end{proof}

We say that $L$-Cantorval $X\subseteq\bbR$ is \emph{special}, whenever each $X$-interval has the left endpoint free.

\begin{cor}
  Let $X$ be a compact metric space that fulfills condition $(\star)$.
If the union of all non-trivial components of $X$ is dense and each non-trivial component has exactly one free endpoint, then $X$ is homeomorphic to the  special $L$-Cantorval.
\end{cor}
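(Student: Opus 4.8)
The plan is to read this off from Theorem~\ref{thm11} and Proposition~\ref{p1}, the one new ingredient being an order-theoretic uniqueness statement pinning down ``the special $L$-Cantorval''. First I would check that $X$ meets the hypothesis of Theorem~\ref{thm11}: since the union of all non-trivial components of $X$ is dense, every non-empty clopen $W\subseteq X$ meets it, hence---being clopen and thus a union of components---contains a non-trivial component, which has exactly one free endpoint by assumption. The proof of Theorem~\ref{thm11} (via Theorem~\ref{thm:9} and Remark~\ref{rem10}, which allow us to orient each arc so that its free endpoint lies on the left) then yields a homeomorphic copy $Y\subseteq\bbR$ of $X$ that is a \emph{left-oriented} $L$-Cantorval. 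Density of the union of non-trivial components and the condition $e\in\inte_X I$ are topological, so $Y$ again has dense union of non-trivial components and every $Y$-interval still has exactly one free endpoint; left-orientedness forces that endpoint to be the left one, so $Y$ is a \emph{special} $L$-Cantorval in which every interval has exactly one free endpoint. It remains to prove that any two such are homeomorphic.

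For the uniqueness I would fix such a $Y$ and analyse the linear order $(\calA_Y\cup\calB_Y,<)$ with the distinguished subset $\calA_Y$, in order to apply Proposition~\ref{p1}. The structural facts to establish are: $\min Y$ lies in a $Y$-interval (a known property of $L$-Cantorvals), while $\max Y$ is a trivial component---if it were an endpoint $[a,\max Y]$ of an interval, then $\max Y$ and, by specialness, also $a$ would be free, contradicting ``exactly one free endpoint''---so $\calA_Y\cup\calB_Y$ has a least element, the leftmost interval preceded only by the unbounded gap $(-\infty,\min Y)$, but, by density of the union of non-trivial components, no greatest $Y$-interval. The left endpoint $a$ of any $Y$-interval $[a,b]$ is free, so $a=\min Y$ or $a$ is the right endpoint of a bounded $Y$-gap, and in either case that gap is the immediate predecessor of $[a,b]$ in $\calA_Y\cup\calB_Y$; the resulting map is a bijection of $\calB_Y$ onto $\calA_Y\setminus\{(\max Y,\infty)\}$. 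The right endpoint of a $Y$-interval is not free, hence a limit from the right of $\calA_Y\cup\calB_Y$, so no $Y$-interval has an immediate successor, and between any two $Y$-intervals lies another; thus $(\calB_Y,<)$ is a countable dense linear order with a first but no last element, i.e.\ order-isomorphic to $\bbQ\cap[0,1)$. Grouping each interval with its predecessor gap, and keeping $(\max Y,\infty)$ as a singleton, one gets convex, pairwise disjoint blocks ordered like $\bbQ\cap[0,1]$, every block but the top one being a gap immediately followed by an interval and the top one a single gap. Hence the isomorphism type of $(\calA_Y\cup\calB_Y,<)$ together with $\calA_Y$ is uniquely determined; a back-and-forth argument as in Lemma~\ref{l1} produces, for any two such $Y$ and $Y'$, an order isomorphism $\calA_Y\cup\calB_Y\to\calA_{Y'}\cup\calB_{Y'}$ carrying $\calA_Y$ onto $\calA_{Y'}$, and Proposition~\ref{p1} upgrades it to an increasing homeomorphism of $\bbR$. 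This legitimises the phrase ``the special $L$-Cantorval'' and, together with the first paragraph, completes the proof.

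I expect the main obstacle to be the bundle of structural facts in the second paragraph---in particular excluding that a $Y$-interval is adjacent on its right to a gap, and that $\max Y$ lies in an interval, and keeping track of the two unbounded gaps so that Proposition~\ref{p1} applies verbatim. Once these are in place, the remaining identification of countable dense linear orders is exactly the Cantor-type back-and-forth already used for Lemma~\ref{l1}.
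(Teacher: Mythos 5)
Your proposal is correct and its first paragraph is essentially the paper's entire proof: density of the union of non-trivial components ensures that every non-empty clopen subset of $X$ (being a union of components) contains a non-trivial component, which by hypothesis has exactly one free endpoint, and Theorem~\ref{thm11} --- whose construction yields a left-oriented copy, forcing the unique free endpoint of each interval to be the left one --- then gives a special $L$-Cantorval. Your second and third paragraphs, proving that any two special $L$-Cantorvals are carried onto each other by an increasing homeomorphism of $\bbR$, are not part of the paper's proof of this corollary; the paper obtains this uniqueness (which justifies the word ``the'') in the subsequent material, via conditions (I)--(III), Lemma~\ref{l2} and Proposition~\ref{p1}, by essentially the same gap--interval pairing and back-and-forth argument that you outline.
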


\begin{proof}
  The union of all non-trivial components of $X$ is dense, hence  each clopen subset of $X$ contains a non-trivial component.
  By Theorem \ref{thm11}, $X$ is homeomorphic to a special $L$-Cantorval.
\end{proof}

Suppose that $X \subseteq \bbR$ is a special $L$-Cantorval and $(\calA_X\cup\calB_X, \leqslant)$ is the linear space assigned to $X$. 
We have the following.

\begin{itemize}
	\item[(I).] If $I\in \calA_X$ and $I$ is not the greatest element of $\calA_X\cup\calB_X$, then there exists $J\in \calB_X$ such that $J$ is the immediate successor of $I$. 
	\item[(II).] If $J\in \calB_X $, then there exists $I\in \calA_X$ such that $I$ is the immediate predecessor of $J$.
	\item[(III).] The ordered space $(\calA_X, \leqslant)$ is isomorphic to the set of rational numbers contained in  the interval $[0,1]$ with the natural ordering.
        \end{itemize}
               Indeed, by the definition the right endpoint of  an $X$-gap belongs to  the $X$-interval, which implies (I).
        The condition  of an $L$-Cantorval being special implies (II).
        Suppose $I,J\in\calA_X$ and $I<J$.
        Let $K$ be an $X$-interval containing the right endpoint of $I$.
        Since $\max K<\inf J$, therefore there exists an $X$-gap that lies between $I$ and $J$.
        So, $\calA_X$ is densely ordered, hence isomorphic to $\bbQ\cap[0,1]$.
        
Conditions (III) and (II) imply that   no $I\in \calA_X$ has an immediate predecessor in $(\calA_X\cup\calB_X, \leqslant)$.
 Indeed, let $I\in \calA_X$ and suppose $J\in \calB_X$   is the immediate predecessor of $I$.  According to (II) there exists  $K\in \calA_X$ which is the immediate predecessor of   $J$. Consequently, no element from $\calA_X$ lies between $I$ and $K$, which contradicts (III).

\begin{lem}\label{l2} Let $(A_1\cup B_1, \leqslant)$ and $(A_2\cup B_2, \leqslant)$ be infinite countable   ordered spaces, where $A_i\cap B_i=\emptyset$ for $i\in \{1,2\}$. If the pairs $(A_i,B_i)$ fulfill conditions \textup{(I)--(III)}, where $A_i$  is substituted for $\calA_X$ and $B_i$ for $\calB_X$, then there exists an order  isomorphism $F\colon A_1\cup B_1 \to A_2\cup B_2$ such that $F[A_1] =A_2$. \end{lem}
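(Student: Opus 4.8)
The plan is to observe that conditions (I)--(III) force the $B_i$-part of the order to be slaved to the $A_i$-part, and then to obtain $F$ by transporting along the isomorphism between the two copies of $\bbQ\cap[0,1]$ supplied by (III).

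I would start with the structural reduction. By (II) every $J\in B_i$ has an immediate predecessor in $A_i\cup B_i$, which is unique and lies in $A_i$: if $a<a'$ were two of them, $a'$ would sit strictly between $a$ and $J$. Write $\pi_i(J)\in A_i$ for it; the same argument makes $\pi_i\colon B_i\to A_i$ injective. By (I), for $a\in A_i$ the point $a$ has an immediate successor in $A_i\cup B_i$ exactly when $a$ is not the greatest element of $A_i\cup B_i$, and then that successor lies in $B_i$. Hence
$$\pi_i[B_i]=\{a\in A_i\colon a\text{ is not the greatest element of }A_i\cup B_i\},$$
which is all of $A_i$ if $\max(A_i\cup B_i)\in B_i$ and is $A_i\setminus\{\max A_i\}$ if $\max(A_i\cup B_i)\in A_i$. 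Denoting by $\sigma_i(a)\in B_i$ the immediate successor of $a\in\pi_i[B_i]$, immediacy gives, for $a\in A_i$ and $a',a''\in\pi_i[B_i]$,
$$a<\sigma_i(a')\iff a\leqslant a',\qquad \sigma_i(a')<\sigma_i(a'')\iff a'<a''.$$
Thus $(A_i\cup B_i,\leqslant)$ is obtained from $(A_i,\leqslant)$ by inserting, immediately after each $a\in\pi_i[B_i]$, a new point $\sigma_i(a)$, and so it is determined, together with its partition into $A_i$ and $B_i$, by $(A_i,\leqslant)$ and the subset $\pi_i[B_i]$.

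Now use (III) to fix an order isomorphism $g\colon(A_1,\leqslant)\to(A_2,\leqslant)$; since $\bbQ\cap[0,1]$ has a least and a greatest element, $g$ sends $\min A_1$ to $\min A_2$ and $\max A_1$ to $\max A_2$, so $g$ carries $A_1\setminus\{\max A_1\}$ onto $A_2\setminus\{\max A_2\}$. Assuming (see below) that $\pi_1[B_1]$ and $\pi_2[B_2]$ both equal this set, $g$ restricts to a bijection $\pi_1[B_1]\to\pi_2[B_2]$, and we may put $F(a)=g(a)$ for $a\in A_1$ and $F(J)=\sigma_2\bigl(g(\pi_1(J))\bigr)$ for $J\in B_1$; that is, the immediate successor of $a$ in $A_1\cup B_1$ goes to the immediate successor of $g(a)$ in $A_2\cup B_2$. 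The two displayed equivalences make it routine that $F$ is an order isomorphism of $A_1\cup B_1$ onto $A_2\cup B_2$ with $F[A_1]=A_2$, hence $F[B_1]=B_2$. (One can equally package this as a back-and-forth construction in the spirit of Lemma~\ref{l1}: new points of $A_1$ are placed inside $A_2$ using density from (III) together with the fact, derived from (II) and (III) exactly as above, that no point of $A_2$ has an immediate predecessor; a new $J\in B_1$ is adjoined together with $\pi_1(J)$ and the pair is sent to $\bigl(a',\sigma_2(a')\bigr)$ for a suitable $a'\in A_2$.)

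The main obstacle is the parenthetical assumption, that is, showing $\pi_i[B_i]=A_i\setminus\{\max A_i\}$ --- equivalently $\max(A_i\cup B_i)\in A_i$ --- for both $i$. (If the two structures were to differ on this, no $A$-preserving isomorphism could exist, since such an isomorphism would carry $\max A_1$ to $\max A_2$.) In the applications, where $A_i\cup B_i=\calA_X\cup\calB_X$ for a special $L$-Cantorval $X$, this is automatic: the greatest element of $\calA_X\cup\calB_X$ is the unbounded gap $(\max X,+\infty)$, which belongs to $\calA_X$. With this in hand the two structures are of the same kind, $g$ behaves as required, and the construction goes through.
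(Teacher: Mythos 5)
Your construction is essentially the paper's own proof: using (III) fix an order isomorphism of $A_1$ onto $A_2$, and extend it to $B_1$ by sending each $J\in B_1$ to the immediate successor (supplied by (I)) of the image of its immediate predecessor (supplied by (II)). The extra care you take is not wasted, however: the ``parenthetical assumption'' you isolate is a genuine gap in the lemma as stated, not merely in your write-up. Conditions (I)--(III) do not decide whether the greatest element of $A_i\cup B_i$ lies in $A_i$ or in $B_i$: take $A=\bbQ\cap[0,1]$ and let $B$ consist of a new immediate successor $a^{+}$ of every $a\in A$ (respectively, of every $a\in A$ except $\max A$); both structures satisfy (I)--(III), yet the greatest element of the first lies in $B$ and of the second in $A$, and since any order isomorphism maps greatest element to greatest element, no isomorphism with $F[A_1]=A_2$ exists between them. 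The paper's one-line justification (``$F$ is a well defined increasing bijection because of (I) and (II)'') silently assumes exactly the point you flag, namely that $\max A_1$ is the immediate predecessor of an element of $B_1$ if and only if $\max A_2$ is the immediate predecessor of an element of $B_2$. Your resolution is the right one for the way the lemma is used: when $A_i\cup B_i$ is $\calA_X\cup\calB_X$ for a compact (in particular a special $L$-Cantorval) $X$, the greatest element is the unbounded gap $(\max X,+\infty)\in\calA_X$, so both structures are of the same type and the construction (yours and the paper's) goes through; strictly speaking, this normalisation (the greatest element of $A_i\cup B_i$ belongs to $A_i$) ought to be added to the lemma's hypotheses.
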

\begin{proof}
Using  (III), fix an order isomorphism $F:A_1\to A_2$. If $J\in B_1$ and $I\in A_1$ is an immediate predecessor of $J$, them let $F(J)$ be an immediate successor of $F(I)$. Thus, $F\colon A_1\cup B_1 \to A_2\cup B_2$ is a well  defined increasing bijection, because of conditions (I) and (II). 
\end{proof}

\begin{cor}
If $X$ and $Y$ are special $L$-Cantorvals, then there exists an increasing homeomorphism  $f\colon\bbR\to\bbR$ such that  $f[X]=Y$.
\end{cor}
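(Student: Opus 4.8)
The plan is to push everything through Proposition~\ref{p1}, reducing the statement to the purely order-theoretic assertion supplied by Lemma~\ref{l2}. First I would record the hypotheses needed. A special $L$-Cantorval is in particular an $L$-Cantorval, so by definition every gap is accumulated on the left by infinitely many intervals and gaps; hence $\calA_X$ and $\calB_X$ are both infinite, and $\calA_X\cup\calB_X$ is an infinite countable ordered set. Moreover $\calA_X\cap\calB_X=\emptyset$, since an $X$-gap lies in $\bbR\setminus X$ while an $X$-interval lies in $X$. By the discussion immediately preceding Lemma~\ref{l2}, the pair $(\calA_X,\calB_X)$ satisfies conditions (I)--(III); applying that same discussion to $Y$ shows $(\calA_Y,\calB_Y)$ satisfies (I)--(III) as well.

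Next I would invoke Lemma~\ref{l2} with $A_1=\calA_X$, $B_1=\calB_X$, $A_2=\calA_Y$, $B_2=\calB_Y$. Since both ordered spaces are infinite and countable, have disjoint $A$- and $B$-parts, and satisfy (I)--(III), the lemma furnishes an order isomorphism
$$F\colon \calA_X\cup\calB_X\longrightarrow \calA_Y\cup\calB_Y \quad\text{with}\quad F[\calA_X]=\calA_Y.$$

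Finally, Proposition~\ref{p1} converts this order isomorphism into an increasing homeomorphism $f\colon\bbR\to\bbR$ with $f[X]=Y$, which is exactly the assertion.

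There is no genuinely hard step remaining: all the substance has already been absorbed into the back-and-forth argument of Lemma~\ref{l2} and into Proposition~\ref{p1}. The only points that call for a moment's attention are checking the hypotheses of Lemma~\ref{l2} --- in particular that $\calA_X\cup\calB_X$ is infinite, which is immediate from the definition of an $L$-Cantorval --- and noticing that conditions (I)--(III) were verified in the text under exactly the hypothesis that $X$ is a special $L$-Cantorval, so they apply to both $X$ and $Y$ without further work.
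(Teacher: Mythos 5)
Your proposal is correct and follows essentially the same route as the paper: verify conditions (I)--(III) for both $X$ and $Y$ via the discussion preceding Lemma~\ref{l2}, apply Lemma~\ref{l2} to obtain the order isomorphism with $F[\calA_X]=\calA_Y$, and then invoke Proposition~\ref{p1}. Your extra checks (infiniteness and disjointness of $\calA_X$ and $\calB_X$) are fine but add nothing beyond the paper's argument.
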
 
\begin{proof}
Let $(\calA_X\cup\calB_X,\leqslant)$ and $(\calA_Y\cup\calB_Y,\leqslant)$ be linear orders assigned to $X$ and $Y$, respectively.
Both spaces $(\calA_X\cup\calB_X,\leqslant)$ and $(\calA_Y\cup\calB_Y,\leqslant)$ fulfill conditions (I)--(III).
By Lemma \ref{l2},  there exists an isomorphism $F\colon \calA_X\cup\calB_X\to \calA_Y\cup\calB_Y$ such that $F[\calA_X]=\calA_Y$.
Proposition \ref{p1} gives a desired homeomorphism.
\end{proof}

\section{Non-homeomorphic $L$-Cantorvals}

%Recall a few notions concerning quotient topology.
Let $X$ be a topological space and $\calC_X$ be the family of all components of $X$.
If $V$ is clopen in $X$, then put $V^*=\{C\in\calC_X\colon C\subseteq V\}$.
The family $\{V^*\colon V\mbox{ is a clopen subset of }X\}$ is a base for the topology $\calT_X$ it generates. 
If $f\colon X\to Y$ is a homeomorphism, then $F\colon \calC_X\to \calC_Y$ is given by the formula $F(C)=f[C]$.
Clearly, if $V$ is a clopen subset of $X$, then $F[V^*]=f[V]^*$.
Hence, $F$ is a homeomorphism and if $Z\subseteq\calC_X$, then  $F|_Z$  is a homeomorphism between $Z$ and $F[Z]$ with respect to  inherited topologies.
Moreover, if $f\colon X_0\to X_1$ is a homeomorphism between  $L$-Cantorvals and  $\calZ_i=\{C\in\calC_{X_i}\colon C\mbox{ has no free endpoint}\}$, then subspaces $\calZ_0\subseteq\calC_{X_0}$ and $\calZ_1\subseteq\calC_{X_1}$ are homeomorphic.  

\begin{lem}
Let $\alpha$ be a countable ordinal  and $A\subseteq\alpha$.
There exists an $L$-Cantorval $X$ such that the set $\{C\in\calC_X\colon C\mbox{ has no free endpoint}\}$ with the topology inherited from $\calC_X$ is homeomorphic to $A$ with the topology inherited from linear topology on $\alpha$.
\end{lem}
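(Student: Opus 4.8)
The plan is to build $X$ explicitly inside $\bbR$ by transfinite recursion on $\alpha$ and to verify directly that it is an $L$-Cantorval with $\calZ_{X}\cong A$; by Theorem~\ref{thm:9} and Remark~\ref{rem10} there is no loss in working in $\bbR$ (alternatively one could build an abstract compact metric space and invoke Theorem~\ref{thm11}). First I would record what $\calZ_{X}$ is for an $L$-Cantorval: no gap can be immediately preceded by an interval, since such a gap would fail the clause that gaps are accumulated on the left by infinitely many intervals and gaps, so the right endpoint of an $X$-interval $[a,b]$ is a limit of $X$ from the right unless $b=\max X$. Hence, the notion of a free endpoint being defined only for non-trivial components, $\calZ_{X}$ is exactly the set of $X$-intervals $[a,b]$ that are not immediately preceded by a gap (counting $(-\infty,\min X)$, so that the leftmost interval is excluded) and with $b\neq\max X$; call these the \emph{left-limited} intervals. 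Thus it suffices to produce, for $A\subseteq\alpha$, an $L$-Cantorval $X=X_{\alpha,A}$ whose left-limited intervals, with the subspace topology from $\calC_{X}$, form a space homeomorphic to $A$, and I would carry along the inductive invariants that $\min X$ is the left endpoint of an $X$-interval and $\max X$ is a trivial component (which also makes the case $b=\max X$ above never occur).

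Two building blocks will be used. A \emph{plain block} $P$ is a clopen piece $C\cap(-\infty,t']$ of the special $L$-Cantorval $C$, where $t$ is the left endpoint of a $C$-gap and $t<t'$ lies in that gap: then $P$ is again a special $L$-Cantorval, $\max P=t$ is a trivial component, $\min P$ is an interval endpoint, and $\calZ_{P}=\emptyset$ because in a special $L$-Cantorval every interval is immediately preceded by a gap. A \emph{marked block} $M$ consists of a plain block $P_{1}$, an interval $J$ with $\min J=\max P_{1}$ and no gap between them, and a sequence of shrinking plain blocks $P^{(1)}>P^{(2)}>\cdots$ accumulating at $\max J$ from the right with a gap between consecutive ones; here $J$ is left-limited (its left endpoint is the limit from the left of the components of $P_{1}$, its right endpoint the limit from the right of the $P^{(k)}$), $\calZ_{M}=\{J\}$, $\max M$ is a trivial component, and $M$ is an $L$-Cantorval. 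More generally, whenever I glue such pieces along $\bbR$ — always with a gap in between, or letting infinitely many of them accumulate from the left at a single point — the result is again an $L$-Cantorval: every bounded gap is immediately followed by the leftmost interval of a plain block, no interval is immediately followed by a gap (the only intervals outside plain blocks are marked ones, which are followed by right-accumulating plain blocks), and every bounded gap is accumulated on the left by the infinitely many gaps and intervals of the plain block whose trivial maximum abuts it.

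The recursion itself: put $X_{0,\emptyset}=P$. If $\alpha=\gamma+1$ and $\gamma$ is isolated in $A$ (automatic when $\gamma$ is $0$ or a successor, and also when $\gamma$ is a limit ordinal with $\sup(A\cap\gamma)<\gamma$), set $X_{\alpha,A}=X_{\gamma,A\cap\gamma}$ if $\gamma\notin A$, and otherwise let $X_{\alpha,A}$ be $X_{\gamma,A\cap\gamma}$ followed by a gap and a scaled copy $M'$ of the marked block; since $M'$ is clopen with $\calZ_{M'}$ a one-point space, this realizes the disjoint sum $A=(A\cap\gamma)\oplus\{\gamma\}$. If $\alpha=\gamma+1$ with $\gamma$ a limit ordinal, $\gamma\in A$ and $\sup(A\cap\gamma)=\gamma$, choose \emph{successor} ordinals $\lambda_{0}<\lambda_{1}<\cdots\to\gamma$, write $A\cap\gamma=\bigsqcup_{n}C_{n}$ with $C_{n}=A\cap[\lambda_{n-1},\lambda_{n})$ and $\lambda_{-1}=0$, realize each $C_{n}$ recursively by a block $Y_{n}$, place $Y_{0}<Y_{1}<\cdots$ with gaps between consecutive blocks accumulating from the left at a point $c_{\gamma}$, and append a left-limited interval $J_{\gamma}=[c_{\gamma},d_{\gamma}]$ followed by a right-accumulating sequence of plain blocks as in $M$. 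Finally, if $\alpha$ is a limit ordinal, do the analogous thing with $\lambda_{n}\to\alpha$ and $A=\bigsqcup_{n}C_{n}$, placing $Y_{0}<Y_{1}<\cdots$ accumulating from the left at a point $z$ which becomes $\max X_{\alpha,A}$, a trivial component. One checks the invariants at each stage.

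The hard part will be the last step: verifying $\calZ_{X_{\alpha,A}}\cong A$. The left-limited intervals of $X_{\alpha,A}$ are exactly those inherited from the blocks used (gluing with gaps creates none and changes the freeness of no endpoint), so there is an obvious bijection onto $A$, namely the inductive identification $\calZ_{Y_{n}}\cong C_{n}$ on each block together with $J_{\gamma}\mapsto\gamma$ (or the point of $\calZ_{M'}$ sent to $\gamma$) in the marked cases. That this bijection is a homeomorphism rests on the elementary fact that a clopen subset of $X$ has all its endpoints in gaps of $X$: hence each $Y_{n}$ is clopen, $\calZ_{Y_{n}}$ is relatively clopen in $\calZ_{X_{\alpha,A}}$ and carries precisely the topology supplied by the induction, and distinct $\calZ_{Y_{n}}$ are topologically separated — matching the clopen decomposition $A\cap\gamma=\bigsqcup_{n}C_{n}$, where one uses that the $\lambda_{n}$ are successors so that each $[\lambda_{n-1},\lambda_{n})$ is clopen in the ordinal. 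In the marked cases, every clopen neighbourhood of the marked interval has its left endpoint in one of the gaps accumulating at the interval's left endpoint, hence contains $\calZ_{Y_{n}}$ for all large $n$, which makes the marked interval a limit from the left of $\bigsqcup_{n}\calZ_{Y_{n}}$ exactly when $\sup(A\cap\gamma)=\gamma$, while no sequence from a single $\calZ_{Y_{n}}$ converges to it because $Y_{n}$ is separated from it by a clopen set. The real obstacle is thus bookkeeping rather than construction: one must place a gap just before a marked interval precisely when the corresponding ordinal is isolated in $A$, and instead let blocks pile up against it with no intervening gap precisely when that ordinal is a limit point of $A$; the shrinking rates of the blocks are irrelevant, and it is the clopen-separation pattern, controlled by where the gaps sit, that has to be arranged to mirror $A$.
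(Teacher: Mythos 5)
Your plan is sound and, with the invariants you carry, it can be completed; but it is a genuinely different route from the paper's. You first characterise $\calZ_X$ inside an $L$-Cantorval (the intervals not immediately preceded by a gap and not reaching $\max X$) and then run a transfinite recursion on $\alpha$, gluing plain and marked blocks according to a case analysis (successor versus limit stage, isolated versus limit point of $A$), the identification $\calZ_X\cong A$ being distributed over the induction via the clopen decompositions $A\cap\gamma=\bigcup_n A\cap[\lambda_{n-1},\lambda_n)$ at successor cut points. The paper instead gives a single uniform construction with no induction at all: realise $\alpha$ as a compact subset of $[0,1]$, blow up each point $a_n\in A$ into an interval $\{a_n\}\times[0,2^{-n}]$, observe that the resulting lexicographically ordered set $\calI$ is order isomorphic to $[0,1]$, and fill each component of the open set $([0,1]\setminus\alpha)\times\{0\}$ with a special $L$-Cantorval whose minimum and maximum are trivial components; the components without free endpoints are then exactly the blown-up intervals, so the subspace topology of $A$ is transported wholesale in one step. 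What your approach buys is explicit control over where the gaps sit relative to the marked intervals, making visible why isolation of a point of $A$ corresponds to a gap immediately preceding the marked interval; what it costs is exactly the bookkeeping you acknowledge, plus one point you should make explicit: your gluing principle is stated for plain and marked blocks but is applied to the recursively built blocks $Y_n$, and it goes through only because of the invariants you maintain ($\min$ on an interval, $\max$ a trivial component) together with the easy observation that an $L$-Cantorval has no isolated points, so a trivial maximum is automatically accumulated from the left by infinitely many intervals and gaps. The paper's blow-up trick avoids the recursion entirely and also shows the ordinal structure is irrelevant: essentially the same construction works for any countable subset of a compact subset of the line.
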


\begin{proof}
We may assume that $\alpha\subseteq [0,1]$  is equipped  with the topology inherited from $[0,1]$ is compact.
Let $a_0,a_1,\ldots$ be an enumeration of $A$.
The set
$$\textstyle \calI=([0,1]\times\{0\})\cup\bigcup\{\{a_n\}\times[0,\frac1{2^n}]\colon n<\omega\}$$
equipped with the lexicographic ordering is order isomorphic to $[0,1]$, in particular $\calI$ with the order topology is homeomorphic to $[0,1]$.
The set 
$$A^*=([0,1]\setminus\alpha)\times\{0\}$$
is an open subset of $\calI$.
Replacing each component of $A^*$ by a special $L$-Cantorval such that its minimum an maximum are its trivial components, we obtain the desired $L$-Cantorval.
\end{proof}

\begin{thm}
There exists continuum many non-homeomorphic $L$-Cantorvals.
\end{thm}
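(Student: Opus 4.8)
The plan is to use the Lemma to turn the problem into a counting question about subspaces of countable ordinals, and then to answer that question by an encoding argument in which the Mazurkiewicz--Sierpi\'nski classification of countable compact spaces does the separating.

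First I would set up the reduction. By the Lemma, for every countable ordinal $\alpha$ and every $A\subseteq\alpha$ there is an $L$-Cantorval $X_A$ for which $\{C\in\calC_{X_A}\colon C\text{ has no free endpoint}\}$, with the topology inherited from $\calC_{X_A}$, is homeomorphic to $A$ with the topology inherited from $\alpha$. By the remark made just before the Lemma, any homeomorphism between two $L$-Cantorvals restricts to a homeomorphism between the corresponding spaces of components without free endpoints; hence $X_A\cong X_B$ implies $A\cong B$. So it suffices to exhibit $2^{\aleph_0}$ pairwise non-homeomorphic subspaces of countable ordinals; then $\{X_A\colon A\ \text{a representative}\}$ is the required family.

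Next I would construct such a family. For each $z\in 2^{\omega}$ I would build a countable scattered space $Z_z$, realised inside a countable ordinal, whose Cantor--Bendixson structure records $z$: fix a distinguished point $p$ and accumulate at $p$ a sequence of pairwise ``separated'' compact clopen pieces $R_0,R_1,\dots$ whose Cantor--Bendixson ranks, read in the order in which they converge to $p$, encode $z$ (for instance $R_n\cong\omega^{k_n}+1$ with $(k_n)$ strictly increasing and coding $z$, plus whatever auxiliary decoration is needed). Since every $R_n$ is compact of finite rank, Mazurkiewicz--Sierpi\'nski shows that $p$ is the unique point of $Z_z$ of Cantor--Bendixson rank $\omega$; thus any homeomorphism $Z_z\to Z_{z'}$ sends $p$ to the corresponding point and respects the filtration of $Z_z$ by the closed sets $Z_z^{(n)}$.

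The hard part will be rigidity: forcing $Z_z\cong Z_{z'}\Rightarrow z=z'$. The naive ``bag of pieces'' is far too flexible, because a countable disjoint union of compact countable spaces absorbs summands of lower Cantor--Bendixson rank, so distinct rank patterns can collapse onto the same space; the decoration of the $R_n$ and their placement inside the ambient ordinal must therefore be arranged so that a homeomorphism fixing $p$ can neither re-shuffle nor merge the pieces, and this is exactly the point at which the arithmetic of the normal forms $\omega^{\gamma}\cdot n+1$ must be exploited. Once $z$ is recovered from the homeomorphism type of $Z_z$ (say, from the Cantor--Bendixson ranks of the clopen neighbourhoods of $p$, or of the canonical compact clopen pieces of $Z_z\setminus\{p\}$), the reduction above delivers $2^{\aleph_0}$ pairwise non-homeomorphic $L$-Cantorvals.
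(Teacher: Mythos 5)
Your reduction is exactly the paper's: combine the Lemma with the observation preceding it (a homeomorphism of $L$-Cantorvals induces a homeomorphism of the subspaces of components without free endpoints), so that it suffices to produce $2^{\aleph_0}$ pairwise non-homeomorphic subspaces of countable ordinals. Where you diverge is in how that family is obtained, and this is where the proposal has a genuine gap. The paper does not construct anything here: it quotes the Mazurkiewicz--Sierpi\'nski result \cite{ms} that there are continuum many pairwise non-homeomorphic countable scattered metric spaces, and the Knaster--Urbanik result \cite{ku} that every such space is a copy of a subset of a countable ordinal, and then applies the Lemma. You instead sketch an explicit coding construction, and you yourself flag its crucial step --- rigidity --- as ``the hard part'' without carrying it out.

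Concretely, the construction as described does not work and is not repaired. If compact clopen pieces $R_n\cong\omega^{k_n}+1$ with $k_n$ finite accumulate at a single point $p$, the resulting space $Z_z$ is a countable compact space, and by the very Mazurkiewicz--Sierpi\'nski classification you invoke it is homeomorphic to $\omega^{\omega}+1$ whenever $(k_n)$ is unbounded (and to $\omega^{k}\cdot m+1$ otherwise); the homeomorphism type retains no memory of $z$. You acknowledge this collapse, but the promised remedy (``whatever auxiliary decoration is needed'', ``the arithmetic of the normal forms must be exploited'') is never specified, so the map $z\mapsto Z_z$ is never shown to be injective on homeomorphism types --- which is the entire content of the counting step. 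To close the gap you should either cite the known facts as the paper does (\cite{ms}, \cite{ku}, or \cite{pw}), or give an actual rigid family, which necessarily uses non-compact scattered spaces and a genuine argument recovering the coding set from clopen neighbourhoods; note also that countable compact spaces alone can never suffice, since they form only $\aleph_1$ homeomorphism classes.
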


\begin{proof}
There exists continuum many countable metric spaces which are scattered and non-homeomorphic, see \cite{ms} or compare \cite{pw}.
Each such space is a copy of a subset of some countable ordinal number, see \cite{ku} or compare \cite{pw}, \cite{tel}.
Applying the previous lemma, we are done.
\end{proof}


\begin{thebibliography}{40}
\bibitem{bfpw} A. Bartoszewicz, M. Filipczak, F. Prus-Wi\'sniowski, 
\textit{Topological and algebraic aspects of subsums of series. Traditional and present-day topics in real analysis}, 345--366, Faculty of Mathematics and Computer Science. University of Łódź, Łódź, 2013. 
\bibitem{enge} R. Engelking, \textit{General topology}. Translated from the Polish by the author. Second edition. Sigma Series in Pure Mathematics, 6. Heldermann Verlag, Berlin, 1989. viii+529 pp.
\bibitem{mg}S. G\l\k{a}b, J. Marchwicki, \emph{Set of uniqueness for Cantorvals},
Results Math.78(2023), no.1, Paper No. 9, 24 pp.
  \bibitem{gn} J. A. Guthrie, J. E. Nymann, \textit{The topological structure of the set of subsums of an infinite series}. Colloq. Math. 55 (1988), no. 2, 323--327.
  \bibitem{ku}B. Knaster, K. Urbanik, \emph{Sur les espaces complets s\'eparables de dimension 0}, Fund. Math. 40(1953), 194--202.
	\bibitem{mo}  P. Mendes, F. Oliveira,  \textit{On the topological structure of the arithmetic sum of two Cantor sets}. Nonlinearity 7 (1994), no. 2, 329--343.
\bibitem{nit2} Z. Nitecki, \textit{Subsum Sets: Intervals, Cantor Sets, and Cantorvals}. ArXiv:1106.3779v2 [math.HO] (8 July 2013).
\bibitem{nit1} Z. Nitecki, \textit{Cantorvals and Subsum Sets of Null Sequences}. Amer. Math. Monthly 122(9), 862-870, 2015. 
\bibitem{ms} St. Mazurkiewicz, W. Sierpi\'nski, 
\textit{Contribution \`a la topologie des ensembles d\'enombrables}. 
Fundamenta Mathematicae 1, 17-27 (1920). 
\bibitem{mk} R. L. Moore, J. R. Kline, \textit{On the most general plane closed point-set through which it is possible to pass a simple continuous arc}. Annals of Mathematics (2) 20 (1919) no. 3,  218--223.
%\bibitem{ompa} K. Omiljanowski, H. Patkowska \textit{On the continua which are Cantor homogeneous or arcwise homogeneous}. Colloq. Math. 58 (1990), no. 2, 201--212.
\bibitem{prus} Prus-Wiśniowski, F.: Beyond the sets of subsums. Preprints of the Faculty of
Mathematics and Informatics, Lodz University (2013)
\bibitem{pw} Sz. Plewik, M. Walczyńska,  \textit{On metric $\sigma$-discrete spaces}. Algebra, logic and number theory, 239–253, Banach Center Publ., 108, Polish Acad. Sci. Inst. Math., Warsaw, (2016).
\bibitem{ss} L. A. Steen, J. A. Seebach,  Jr. \textit{Counterexamples in topology}.  Dover Publications, Inc., Mineola, NY, 1995. xii+244 pp.
\bibitem{tel}R. Telgársky, \textit{Total paracompactness and paracompact dispersed spaces}. Bull. Acad. Polon. Sci. Sér. Sci. Math. Astronom. Phys. 16 (1968), 567--572.
\bibitem{wal}M. Walczy\'nska, \textit{Zastosowania teorii kategorii
w topologii og\'olnej}. Ph. D. Thesis (2018), http://hdl.handle.net/20.500.12128/4979.
\bibitem{zip} L. Zippin, \textit{The Moore--Kline problem}. Trans. Amer. Math. Soc. 34 (1932), no. 3, 705--721.
\bibitem{zipc}L.  Zippin, \textit{Correction to a paper: ``The Moore--Kline problem''} [Trans. Amer. Math. Soc. 34 (1932), no. 3, 705--721; 1501658]. Trans. Amer. Math. Soc. 35 (1933), no. 4, 972.
\end{thebibliography}
\end{document}